\pgfplotsset{compat=newest}
\DeclarePairedDelimiter{\floor}{\lfloor}{\rfloor}
\newcommand{\Z}{\mathbb{Z}}
\newcommand{\Q}{\mathbb{Q}}
\newcommand{\R}{\mathbb{R}}
\newcommand{\C}{\mathbb{C}}
\newcommand{\HH}{\mathbb{H}}
\newcommand{\ind}{\mathbbm{1}}
\newcommand{\eps}{\varepsilon}
\newtheorem{thm}{Theorem}
\newtheorem{prop}{Proposition}
\newtheorem{lemma}{Lemma}
\newtheorem*{claim*}{Claim}
\newtheorem*{remark}{Remark}
\newtheorem{corollary}{Corollary}
\newtheorem{conjecture}{Conjecture}
\title{Moments of the Zeros of Faber Polynomials of the Miller Basis}
\date{}
\author{Adi Zilka\footnote{E-mail: adizilka@mail.tau.ac.il}}
\begin{document}
\maketitle


\begin{abstract}
    We study the zeros of modular forms in the Miller basis, a natural basis for the space of modular forms.
    We show that the zeros of their Faber polynomials have linear moments. By analyzing the moments we can extend the known range of the forms in the Miller basis for which at least one of the zeros is not on the arc - the circular part of the boundary of the fundamental domain. Additionally, for forms in the Miller basis of an index asymptotically linear in the weight such that all zeros are on the arc, we compute the limit distribution of the zeros, which depends on the asymptotic ratio of the index to the weight. 
\end{abstract}


\section{Introduction and Main Results}

For $k \ge 0$ an even integer let $M_k$ be the space of modular forms of weight $k$ for the full modular group $SL_2(\Z)$. Each $f \in M_k$ has the $q$ expansion 
$$f(\tau) = \sum_{n=0}^{\infty} a_f(n) q^n$$
where $q = e^{2 \pi i \tau}$, $\tau \in \mathbb{H} = \{ \tau : \Im{\tau} > 0 \}$. 
\\\\
Writing $k = 12\ell + k'$, $k' \in \{0,4,6,8,10,14\}$, the \textit{Miller basis} of $M_k$ consist of the unique elements $f_{k,m}$ of $M_k$ with the $q$ expansion
$$f_{k,m} = q^m + O(q^{\ell+1})$$
for $m = 0,...,\ell$.
\\\\
Our goal is to understand the zeros of the forms in the Miller basis. An important method for studying the zeros was first used for the Eisenstein series in 1970 by F. Rankin and Swinnerton-Dyer \cite{Eisenstein zeros}. They showed that the zeros in the fundamental domain of the Eisenstein series $E_{k}$ all lie on the arc $\mathcal{A}=\{e^{i\theta}:\theta\in [\frac{\pi}{2},\frac{2\pi}{3}]\}$, and become uniformly distributed in $\mathcal{A}$ as $k\to\infty$. W. Duke and P. Jenkins \cite{first miller basis} used a different approach and showed that the zeros of $f_{k,0}=1+O(q^{\ell+1})$ all lie on the arc $\mathcal{A}$ and become uniformly distributed. Their result was later extended by Raveh \cite{Roei's paper} who showed that all the zeros of $f_{k,m}$ are on the arc for $m < \frac{2}{9}\ell$ and showed the zeros are uniformly distributed for $m = o(\ell)$. 
\\\\
The main tool we will use to study the zeros are Faber polynomials. We write $F_{k,m}(t) = t^{\ell - m} + e_1 t^{\ell - m - 1} + ... + e_{\ell - m}$ for the Faber polynomial of $f_{k,m}$. The Faber polynomial is the polynomial that satisfies
$$f_{k,m} = \Delta^{\ell}\cdot F_{k,m}(j)\cdot E_{k'}$$
where $\Delta$ is the modular discriminant and $j$ is the $j$-invariant. \\\\
Take $x_i = x_i(k,m)$ to be the zeros of the Faber polynomial, counted with multiplicity. Then $j^{-1}(x_i)$ are the zeros of the modular form and $j$ is a bijection from the fundamental domain to $\C$. This means that we can study the zeros of a modular form by studying the zeros of its Faber polynomial. Note that $j(\mathcal{A}) = [0,1728]$, $j$ maps the imaginary axis to $[1728, \infty)$ and the left boundary segment to the negative real numbers.
\\\\
Using Faber polynomials, Rudnick \cite{large m} showed that if the degree of the Faber polynomial $\ell - m = D$ is fixed as $k \to \infty$ the zeros of $f_{k,m}$ lie on D vertical lines in the fundamental domain and are of approximate height $\log(k)$.
\\\\
Our goal is to study the case where the degree of the polynomial is not bounded. It turns out the zeros do have a structure that can be analyzed with no assumptions on the degree.
\\\\
Recall that we take $x_i = x_i(k,m)$ be the zeros of the Faber polynomial, counted with multiplicity.
\begin{thm}\label{linearity thm}
    There exists $A_n, B_n, C_n(k')$ such that for every $n\le \ell - m$,
    $$\sum_{i=1}^{\ell - m} x_i^n = A_n\cdot k + B_n \cdot m + C_n(k')$$
    where $A_n = \frac{1}{2\pi} \int_{\mathcal{A}} j^n(\theta)d\theta$, $-B_n$ is the coefficient of $q^0$ in $j^n$, $C_n(0) = C_n(4) = C_n(8) = 0$ and $C_n(6) = C_n(10) = C_n(14) = -\frac{1728^n}{2}$. 
\end{thm}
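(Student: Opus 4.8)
The plan is to interpret $\sum_i x_i^{\,n}$ as a count of the zeros of $f_{k,m}$ weighted by powers of $j$, and to evaluate this count with a Rankin--Swinnerton-Dyer type contour integral over the standard fundamental domain $\mathcal F$. The linearity in $k$ will come out of the weight-$k$ transformation law of $f_{k,m}$, the linearity in $m$ out of its order of vanishing at the cusp, and the $k'$-dependence out of the fixed factor $E_{k'}$.

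First I would reduce to a statement about $f_{k,m}$ alone. For a nonzero meromorphic modular form $g$ write $\nu_P(g)$ for the order of $g$ at $P$, with the standard normalisation $\nu_i\in\tfrac12\Z$, $\nu_\rho\in\tfrac13\Z$ at the elliptic points $i,\rho=e^{2\pi i/3}$, and put $S_n(g):=\sum_{P\neq\infty}\nu_P(g)\,j(P)^n$. Since $F_{k,m}(j)=f_{k,m}\cdot\Delta^{-\ell}\cdot E_{k'}^{-1}$ and $\Delta$ has no zeros in $\HH$, additivity of orders gives $\nu_P\big(F_{k,m}(j)\big)=\nu_P(f_{k,m})-\nu_P(E_{k'})$ for every $P\neq\infty$; and because $j$ is a bijection from $\mathcal F$ to $\C$ while $j(\tau)-j(P)$ vanishes to the elliptic order $e_P\in\{1,2,3\}$ at $P$, one checks that $\nu_P\big(F_{k,m}(j)\big)$ is exactly the multiplicity of $j(P)$ as a root of $F_{k,m}$, so that $\sum_{P\neq\infty}\nu_P\big(F_{k,m}(j)\big)\,j(P)^n=\sum_i x_i^{\,n}$. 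This yields $\sum_i x_i^{\,n}=S_n(f_{k,m})-S_n(E_{k'})$. The term $S_n(E_{k'})$ is then read off from the well-known zeros of $E_{k'}$ (only at $\rho$ and/or $i$, i.e.\ at $j=0$ and $j=1728$): for $n\ge1$ it equals $0$ when $k'\in\{0,4,8\}$ and $\tfrac{1728^n}{2}$ when $k'\in\{6,10,14\}$, the factor $\tfrac12$ being the elliptic order at $i$. This already gives $C_n(k')=-S_n(E_{k'})$ as stated. (For $n=0$ the claimed identity is just the valence formula $\ell-m=\tfrac{k-k'}{12}-m$, so I take $n\ge1$.)

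It remains to show $S_n(f_{k,m})=A_nk+B_nm$. For this I would integrate $j(\tau)^n\,f_{k,m}'(\tau)/f_{k,m}(\tau)$ counterclockwise around the fundamental domain truncated at a large height $Y$, indenting around any zeros on the boundary just as in the proof of the valence formula; since $j^n$ is holomorphic on $\HH$, the argument principle identifies this integral with $S_n(f_{k,m})$ once $Y$ is large enough to enclose all zeros. The two vertical sides cancel by periodicity of $j$ and $f_{k,m}$. On the circular arc I fold the right half onto the left half via $\tau\mapsto-1/\tau$: using $j(-1/\tau)=j(\tau)$ and $f_{k,m}(-1/\tau)=\tau^k f_{k,m}(\tau)$, whence $\tfrac{f_{k,m}'}{f_{k,m}}(-1/\tau)=k\tau+\tau^2\tfrac{f_{k,m}'}{f_{k,m}}(\tau)$, the two halves recombine (the $f_{k,m}$-dependent parts cancelling) to leave precisely $\tfrac{k}{2\pi}\int_{\pi/2}^{2\pi/3}j(e^{i\theta})^n\,d\theta=A_nk$. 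On the top segment the substitution $q=e^{2\pi i\tau}$ converts the contribution into $-\mathrm{Res}_{q=0}\big(j^n\,d\log f_{k,m}\big)$; writing $\log f_{k,m}=m\log q+\psi(q)$ with $\psi$ holomorphic at $q=0$, this is $-m\,[q^0]j^n-\mathrm{Res}_{q=0}\big(j^n\psi'\,dq\big)$, and the first term is $B_nm$.

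The heart of the matter, and the step I expect to be most delicate to phrase cleanly, is that $\mathrm{Res}_{q=0}\big(j^n\psi'\,dq\big)=0$, and this is exactly where the defining gap of the Miller basis and the hypothesis $n\le\ell-m$ enter. Indeed $f_{k,m}=q^m+O(q^{\ell+1})$ forces $f_{k,m}/q^m=1+O(q^{\ell-m+1})$, hence $\psi'=O(q^{\ell-m})$, so $j^n\psi'=q^{-n}(1+O(q))\cdot O(q^{\ell-m})=O(q^{\ell-m-n})$ has no $q^{-1}$ term once $n\le\ell-m$, and the residue vanishes. (When $n=\ell-m+1$ this residue would instead pick up the first nonzero higher Fourier coefficient of $f_{k,m}$, which is not linear in $k$ and $m$; this is the natural obstruction to extending the range of $n$.) Combining the three edge contributions gives $S_n(f_{k,m})=A_nk+B_nm$, which together with the reduction above proves the theorem. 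The remaining work — the corner indentations at $\rho,i,\rho+1$ on the truncated contour and the exact constants in the arc-folding — is routine and identical to the Rankin--Swinnerton-Dyer computation.
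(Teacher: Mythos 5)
Your argument is correct, but it takes a genuinely different route from the paper. You prove the identity by a single Rankin--Swinnerton-Dyer style contour integration of $j^n\,f_{k,m}'/f_{k,m}$ around the (truncated, indented) fundamental domain: the folded arc produces $A_nk$ with $A_n=\frac{1}{2\pi}\int_{\mathcal{A}}j^n\,d\theta$, the cusp term produces $B_nm$ with $B_n=-[q^0]j^n$, the elliptic-point/Eisenstein bookkeeping produces $C_n(k')$, and the hypothesis $n\le\ell-m$ enters exactly where it should, namely in killing $\mathrm{Res}_{q=0}(j^n\psi'\,dq)$ because $f_{k,m}/q^m=1+O(q^{\ell-m+1})$. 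The paper instead proceeds algebraically: it first shows $\sum_i x_i^n$ is a polynomial in $(k,m)$ via Newton's identities applied to the Faber coefficients, then forces this polynomial to be linear and homogeneous by comparing the leading Faber coefficients of $f_{12\ell,m}^t$ with those of $f_{12t\ell,tm}$; it then imports the value of $A_n$ from the Duke--Jenkins equidistribution theorem and extracts $B_n=-c_n(0)$ from a lengthy combinatorial computation of the Faber coefficients modulo $\ell$ and $m^2$. Your approach is more economical and self-contained --- it delivers all three constants with their explicit values in one computation, needs no equidistribution input, and makes the role of the cutoff $n\le\ell-m$ transparent (beyond it the residue picks up the unknown coefficient $a_{f}(\ell+1)$, which is not linear in $k$ and $m$); it is essentially the ``analytic method'' the paper attributes to Duke--Jenkins and Raveh but deliberately avoids. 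What the paper's algebraic route buys in exchange is independence from the analytic theory (everything reduces to $q$-expansion identities, which the author notes extends to more general settings) and the structural observation that linearity follows from multiplicativity alone, before any constants are computed. Two small points in your write-up are worth keeping explicit: the stated formula for $C_n(k')$ (and for $A_n$, $B_n$) is only consistent for $n\ge1$, as you correctly note when discarding $j(\rho)^n$; and the boundary indentations must be paired under $\tau\mapsto\tau+1$ and $\tau\mapsto-1/\tau$ using the invariance of $j^n$ under these maps, which is routine but is where zeros on the arc --- the generic situation here --- are actually handled.
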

The proof of Theorem \ref{linearity thm} uses only basic facts about Faber polynomials, like the fact that the leading coefficients of $F_{24 \ell, 2m}$ are identical to the leading coefficients of $F_{12\ell, m}^2$. The value of $A_n$ is a direct corollary of a result from W. Duke and P. Jenkins \cite{first miller basis}, which they proved by analytic methods. The value of $B_n$ can be derived from a recent paper by R. Raveh \cite{Roei's paper} proven by similar methods. We also give a purely algebraic proof which has the added benefit of working in a more general setting. 
\\\\
This theorem gives a very structured constraint on the roots of the Faber polynomials and can be used in proving statements about the roots. The sums of the powers of the roots hold all the information about the coefficients of the polynomial (using Newton's identities we can go back and forth between them), but in some cases are much simpler to work with. Just from noticing that if all the zeros of the form are on the arc these sums are positive, we get the following 
\begin{corollary}
    If $k' = 0,4,8$ and $\frac{30}{31}\ell + k'\frac{5}{62} < m \le \ell - 1$, at least one of the zeros of $f_{k,m}$ is not on the arc.
    \\\\
    If $k' = 6,10,14$ and $\frac{30}{31} \ell + k' \frac{5}{62}  - \frac{72}{31} < m \le \ell - 1$, at least one of the zeros of $f_{k,m}$ is not on the arc.
\end{corollary}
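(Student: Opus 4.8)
The idea is to read the corollary off from Theorem~\ref{linearity thm} together with the single geometric fact that $j$ maps the arc onto $[0,1728]$. Suppose every zero of $f_{k,m}$ lies on $\mathcal{A}$. The zeros of the factor $E_{k'}$ sit at $\tau=i$ and/or $\tau=\rho$, which already lie on $\mathcal{A}$, so the hypothesis is really a statement about $F_{k,m}(j)$: since $j$ is a bijection of the fundamental domain onto $\C$ with $j(\mathcal{A})=[0,1728]$, the point $j^{-1}(x)$ lies on $\mathcal{A}$ exactly when $x\in[0,1728]$. Hence all roots $x_1,\dots,x_{\ell-m}$ of $F_{k,m}$ are real and lie in $[0,1728]$; in particular each $x_i\ge 0$, so $\sum_{i=1}^{\ell-m}x_i^{\,n}\ge 0$ for every $n\ge 1$.

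Now I would feed this into Theorem~\ref{linearity thm}. Since $m\le\ell-1$ forces $\ell-m\ge1$, the value $n=1$ is admissible, giving $A_1k+B_1m+C_1(k')\ge0$ with $A_1=\tfrac1{2\pi}\int_{\mathcal A}j\,d\theta=60$, $B_1=-744$ (minus the constant term of $j$), and $C_1(k')=0$ for $k'\in\{0,4,8\}$, $C_1(k')=-\tfrac{1728}{2}$ for $k'\in\{6,10,14\}$. Writing $k=12\ell+k'$, the inequality $60k-744m+C_1(k')\ge0$ becomes
\[
m\ \le\ \frac{60(12\ell+k')+C_1(k')}{744}\ =\ \frac{30}{31}\,\ell+\frac{5}{62}\,k'+\frac{C_1(k')}{744}.
\]
For $k'\in\{0,4,8\}$ the last term vanishes, so this is $m\le\tfrac{30}{31}\ell+\tfrac{5}{62}k'$, and the contrapositive is exactly the first assertion. (Any $n\le\ell-m$ could be used in place of $n=1$, but $n=1$ already gives the claimed bound and, in these cases, the sharpest linear one.)

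For $k'\in\{6,10,14\}$ I would have to work harder, because the same step yields only $m\le\tfrac{30}{31}\ell+\tfrac{5}{62}k'-\tfrac{36}{31}$, short of the stated threshold by $\tfrac{36}{31}$; equivalently, I need to upgrade the trivial bound $\sum x_i\ge0$ to $\sum x_i\ge\tfrac{1728}{2}$, since $S_1=60k-744m-\tfrac{1728}{2}\ge\tfrac{1728}{2}$ rearranges to $m\le\tfrac{60k-1728}{744}=\tfrac{30}{31}\ell+\tfrac{5}{62}k'-\tfrac{72}{31}$. The mechanism should be that for these $k'$ one has $E_6\mid E_{k'}$, so $f_{k,m}$ is forced to vanish at $\tau=i$, where $j=1728$ — precisely the zero responsible for the term $-\tfrac{1728^{\,n}}{2}$ in $C_n(k')$. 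One then wants to argue that, once all the $x_i$ lie in $[0,1728]$, this pinned zero at the top of the interval prevents the $x_i$ from collapsing toward $j=0$, forcing $\sum x_i\ge\tfrac{1728}{2}$; a Rankin--Swinnerton-Dyer-style study of the real function $\theta\mapsto e^{ik\theta/2}f_{k,m}(e^{i\theta})$ on $[\tfrac{\pi}{2},\tfrac{2\pi}{3}]$, anchored at its known behaviour at $\theta=\tfrac{\pi}{2}$, looks like the natural tool, and the squaring identity underlying $C_n(k')$ (relating the low power sums of $F_{2k,2m}$ to $2\sum x_i^{\,n}+1728^{\,n}$) is a second avenue.

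The step I expect to be the main obstacle is exactly this last one. Everything through the $k'\in\{0,4,8\}$ case is a one-line consequence of Theorem~\ref{linearity thm} and the positivity of the $x_i$; the sharp constant $\tfrac{72}{31}$ for $k'\in\{6,10,14\}$ is where genuine input is needed, namely turning ``$E_{k'}$ already has a zero at $i$'' into the quantitative statement $\sum x_i\ge\tfrac{1728}{2}$ rather than merely $\sum x_i\ge 0$.
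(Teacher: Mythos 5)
Your argument is precisely the paper's argument: take $n=1$ in Theorem~\ref{linearity thm}, observe that if every zero of $f_{k,m}$ lies on $\mathcal{A}$ then every root of $F_{k,m}$ lies in $[0,1728]$ (the zeros of the factor $E_{k'}$ at $i$ and $\rho$ are already on the arc and contribute only through $C_1(k')$), hence $\sum_i x_i=A_1k+B_1m+C_1(k')\ge 0$, and take the contrapositive. For $k'\in\{0,4,8\}$ your computation with $A_1=60$, $B_1=-744$, $C_1=0$ is complete and correct.

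For $k'\in\{6,10,14\}$ you have correctly located the discrepancy but misjudged where the fault lies. The paper's own proof is nothing more than ``take $n=1$ in the Proposition of Section~4,'' which yields the threshold shift $\frac{C_1(k')}{-B_1}=\frac{-1728/2}{744}=-\frac{36}{31}$, exactly as you computed; the constant $-\frac{72}{31}=\frac{-1728}{744}$ in the statement is an arithmetic slip (the factor $\frac{1}{2}$ in $C_1(k')=-\frac{1728}{2}$ got dropped), and the corollary as actually proved reads $\frac{30}{31}\ell+\frac{5}{62}k'-\frac{36}{31}<m$. One can confirm $C_1(k')=-864$ directly: computing $e_1$ gives $\sum_i x_i=720\ell-744m+\gamma_{k'}$ with $\gamma_{k'}$ the coefficient of $q$ in $E_{k'}$, and for $k'=6$ this is $60k-744m-864$. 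So you should not try to upgrade $\sum x_i\ge 0$ to $\sum x_i\ge 864$: that stronger inequality is not established anywhere in the paper, and the mechanism you sketch for it does not work, because the forced zero of $f_{k,m}$ at $\tau=i$ is a zero of the factor $E_{k'}$, not of $F_{k,m}(j)$, so it places no constraint on the roots $x_i$ and in particular does not stop them from accumulating near $j=0$. Your proposal therefore establishes everything the paper's proof establishes; the residual gap is in the paper's stated constant, not in your argument.
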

Using Newton's identities, we obtain a new formula for the coefficients of the Faber polynomials. There are no previously known nonrecursive formulas for the coefficients.
\begin{corollary}
    The coefficient $e_n$ of the Faber polynomial is given by
    $$e_n = \sum_{\substack{t_1 + 2 t_2 + ... + n t_n = n \\ 0 \le t_s}} \prod_{s = 1}^n (-1)^{t_s} \frac{(A_s \cdot k + B_s \cdot m + C_s(k'))^{t_s}}{t_s ! s^{t_s}}.$$
\end{corollary}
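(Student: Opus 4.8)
The plan is to combine Theorem~\ref{linearity thm} with the classical closed form expressing the coefficients of a monic polynomial in terms of the power sums of its roots. Set $N=\ell-m$ and write $p_s=\sum_{i=1}^N x_i^s$ for the $s$-th power sum. Since $F_{k,m}(t)=\prod_{i=1}^N(t-x_i)=t^N+e_1t^{N-1}+\dots+e_N$, the coefficient $e_n$ is $(-1)^n$ times the $n$-th elementary symmetric function $\sigma_n$ of the roots for $0\le n\le N$; equivalently, with this normalization, $\sum_{n\ge 0}e_n u^n=\prod_{i=1}^N(1-x_i u)$, which is the identity I would actually use.

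I would then extract the closed form through generating functions. From
$$\sum_{n\ge 0}e_n\,u^n=\prod_{i=1}^N(1-x_iu)=\exp\!\Big(\sum_{i=1}^N\log(1-x_iu)\Big)=\exp\!\Big(-\sum_{s\ge 1}\tfrac{p_s}{s}\,u^s\Big),$$
I factor the exponential as $\prod_{s\ge 1}\exp(-\tfrac{p_s}{s}u^s)$, expand each factor as a power series in $u^s$, and read off the coefficient of $u^n$:
$$e_n=\sum_{\substack{t_1+2t_2+\dots+nt_n=n\\ t_s\ge 0}}\ \prod_{s=1}^n(-1)^{t_s}\frac{p_s^{\,t_s}}{t_s!\,s^{t_s}}.$$
This is precisely the non-recursive form of Newton's identities (it can also be phrased via the complete Bell polynomials, or as the cycle index of the symmetric group evaluated at the power sums), so one could alternatively just quote it. Finally, Theorem~\ref{linearity thm} supplies $p_s=A_s k+B_s m+C_s(k')$ for every $s\le n\le N=\ell-m$; substituting this into the display above gives the stated formula.

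I do not anticipate a genuine obstacle here: the argument is a formal manipulation. The two points meriting a line of care are the normalization — because the Faber polynomial is written with $+e_i$ rather than with alternating signs, one gets $\sum e_n u^n=\prod(1-x_iu)$ with no spurious overall sign, and the $(-1)^{t_s}$ in the final formula comes entirely from expanding the exponential — and the range of validity: Theorem~\ref{linearity thm} provides $p_s$ only for $s\le\ell-m$, but since $e_n$ is defined only for $n\le\ell-m$ and its formula involves $p_s$ with $s\le n$, this is exactly the range available, so no additional input is needed.
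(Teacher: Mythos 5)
Your argument is correct and matches the paper's route: the paper likewise obtains the formula by feeding the power sums $p_s=A_sk+B_sm+C_s(k')$ from Theorem~\ref{linearity thm} into the non-recursive (exponential/Bell-polynomial) form of Newton's identities, which you derive explicitly via the generating function $\prod_i(1-x_iu)=\exp(-\sum_s p_su^s/s)$. Your sign bookkeeping and the observation that only $p_s$ with $s\le n\le\ell-m$ is needed are both right.
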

The above corollary is used in the computation of $B_n$.
\\\\
The power sums also hold information on the distribution of the roots. As the $n$th moment of the distribution of the roots is 
$$M_n(k,m) = \frac{1}{\ell - m} \sum_{i=1}^{\ell - m} x_i^n = \frac{A_n \cdot k + B_n \cdot m + C_n(k')}{\ell- m } = \frac{A_n (12 + \frac{k'}{\ell}) + B_n \cdot \frac{m}{\ell} + \frac{C_n(k')}{\ell}}{1 - \frac{m}{\ell} },$$
from bounds on the moments, as long as all the roots are on $\R$ (all the zeros of the modular form are on the boundary or on $i \R$), the moments uniquely define the distribution. Since in the limit the moments depend only on $\frac{m}{\ell}$, the limit distribution depends only on $\lim_{\ell \to \infty} \frac{m}{\ell}$.
\\\\
The moments can also be used to determine the limit distribution of the zeros of the modualr form on the arc.
\begin{thm}\label{limit dist}
    For $m \sim c \ell$, $0 < c < 1$ for which all the zeros of $f_{k,m}$ are on the arc, as $\ell \to \infty$ the limit probability that a zero of the modular form on the arc is between $[\theta_1, \theta_2]$ is
    $$\frac{6}{\pi(1-c)}(\theta_2-\theta_1)+\frac{2c}{1-c} (\cos(\theta_2)-\cos(\theta_1)).$$
\end{thm}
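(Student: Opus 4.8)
The plan is to show that the empirical measures of the roots of $F_{k,m}$ converge weakly to an explicit probability measure, which I identify by the method of moments, and then to transfer this to the angular coordinate on $\mathcal{A}$. Since by hypothesis every zero of $f_{k,m}$ lies on $\mathcal{A}$, all $\ell-m$ roots $x_1,\dots,x_{\ell-m}$ of $F_{k,m}$ lie in $j(\mathcal{A})=[0,1728]$; writing $x_i=j(e^{i\theta_i})$ with $\theta_i\in[\frac{\pi}{2},\frac{2\pi}{3}]$, the quantity to be computed is $\lim_{\ell\to\infty}\frac{1}{\ell-m}\#\{i:\theta_i\in[\theta_1,\theta_2]\}$. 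I would work with the probability measures $\mu_{k,m}:=\frac{1}{\ell-m}\sum_i\delta_{x_i}$, all supported on the \emph{fixed} compact interval $[0,1728]$. By Theorem~\ref{linearity thm}, for each fixed $n\ge 1$ and all $\ell$ large enough that $n\le\ell-m$ (which holds eventually, since $\ell-m\to\infty$ as $c<1$),
\[
\int x^n\,d\mu_{k,m}(x)=\frac{A_nk+B_nm+C_n(k')}{\ell-m}\longrightarrow\frac{12A_n+cB_n}{1-c}=:\mathfrak{m}_n\qquad(\ell\to\infty),
\]
using $k=12\ell+k'$ and $m\sim c\ell$.

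Since all $\mu_{k,m}$ are supported on the compact set $[0,1728]$, the method of moments applies: the space of probability measures on $[0,1728]$ is weakly compact, so every subsequence of $(\mu_{k,m})$ has a further subsequence converging weakly to some probability measure $\mu_\infty$ on $[0,1728]$, and then $\int x^n\,d\mu_\infty=\mathfrak{m}_n$ for every $n$; since polynomials are dense in $C([0,1728])$, such a $\mu_\infty$ is unique. Hence $(\mu_{k,m})$ converges weakly to the unique probability measure $\mu_c$ on $[0,1728]$ whose $n$-th moment is $\mathfrak{m}_n$ for all $n\ge 0$.

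Next I would identify $\mu_c$. Let $\nu_c$ be the measure on $[\frac{\pi}{2},\frac{2\pi}{3}]$ with density $\rho_c(\theta)=\frac{6}{\pi(1-c)}-\frac{2c}{1-c}\sin\theta$, and let $j_*\nu_c$ be its pushforward under the map $\theta\mapsto j(e^{i\theta})$, which (being continuous and injective) is a homeomorphism of $[\frac{\pi}{2},\frac{2\pi}{3}]$ onto $[0,1728]$. By the uniqueness above, $\mu_c=j_*\nu_c$ follows once one checks that $j_*\nu_c$ has $n$-th moment $\mathfrak{m}_n$; recalling $A_n=\frac{1}{2\pi}\int_{\pi/2}^{2\pi/3}j(e^{i\theta})^n\,d\theta$ and that $-B_n$ is the coefficient of $q^0$ in $j^n$, this reduces to the single identity
\[
\int_{\pi/2}^{2\pi/3}j(e^{i\theta})^n\sin\theta\,d\theta=\tfrac12\,[q^0]j^n=-\tfrac{B_n}{2},
\]
since then $\int_{\pi/2}^{2\pi/3}j(e^{i\theta})^n\rho_c(\theta)\,d\theta=\frac{12A_n}{1-c}+\frac{cB_n}{1-c}=\mathfrak{m}_n$. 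This identity is the crux of the proof, and I would establish it by a fundamental-domain contour deformation: $[q^0]j^n=\int j(\tau)^n\,d\tau$ over the horizontal segment $\Im\tau=T$, $\Re\tau\in[-\tfrac12,\tfrac12]$; since $j^n$ is holomorphic on $\HH$ this segment may be pushed down to the boundary of the standard fundamental domain, where the two vertical sides cancel by periodicity $j(\tau+1)=j(\tau)$, leaving only the integral over the circular boundary $\{e^{i\theta}:\theta\in[\frac{\pi}{3},\frac{2\pi}{3}]\}$; finally the involution $\tau\mapsto-1/\tau$, which acts on the unit circle as $\theta\mapsto\pi-\theta$, together with $j(-1/\tau)=j(\tau)$, folds this onto $2\int_{\pi/2}^{2\pi/3}j(e^{i\theta})^n\sin\theta\,d\theta$, giving the claim. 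I expect this contour argument — in particular getting the orientations and the two half-arcs correctly matched — to be the only real obstacle; the remaining steps are soft (compactness plus density of polynomials) or routine bookkeeping of constants.

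Finally, transporting $\mu_{k,m}\to j_*\nu_c$ back through $j^{-1}$ shows that the empirical measures of the angles $\theta_i$ converge weakly to $\nu_c$; since $\rho_c$ is continuous, $\nu_c$ is atomless, so every interval $[\theta_1,\theta_2]$ is a continuity set and $\frac{1}{\ell-m}\#\{i:\theta_i\in[\theta_1,\theta_2]\}\to\nu_c([\theta_1,\theta_2])=\int_{\theta_1}^{\theta_2}\rho_c(\theta)\,d\theta=\frac{6}{\pi(1-c)}(\theta_2-\theta_1)+\frac{2c}{1-c}(\cos\theta_2-\cos\theta_1)$, which is the assertion. As a by-product, since $\mu_c$ is a genuine probability measure the density $\rho_c$ must be nonnegative on $[\frac{\pi}{2},\frac{2\pi}{3}]$; this fails once $c>3/\pi$, so along such a sequence the hypothesis that all zeros lie on $\mathcal{A}$ can hold only for $c\le 3/\pi$, in line with (and in the range $(3/\pi,\tfrac{30}{31})$ sharpening) the Corollary above.
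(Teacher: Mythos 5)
Your proposal is correct and is essentially the paper's second proof of Theorem \ref{limit dist}: convergence of the moments $\frac{1}{\ell-m}\sum_i x_i^n \to \frac{12A_n+cB_n}{1-c}$ on the compact set $[0,1728]$, uniqueness via the (Hausdorff) moment problem, and identification of the density $\frac{6}{\pi(1-c)}-\frac{2c}{1-c}\sin\theta$ through the identity $\int_{\pi/2}^{2\pi/3}j^n(e^{i\theta})\sin\theta\,d\theta=\tfrac12 c_n(0)=-\tfrac12 B_n$, which the paper establishes by exactly the contour deformation you describe (the lemma opening Section 4). The only differences are cosmetic: you spell out the weak-compactness/portmanteau details the paper leaves implicit, and the paper additionally offers a first proof via Raveh's counting of zeros between consecutive integer multiples of $\pi$ of $h(\theta)=\tfrac{k}{2}\theta+2\pi m\cos\theta$.
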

Using a result of Raveh \cite{Roei's paper} we can take $c < \frac{2}{9}$.
\\\\
Thus in our range, the limit distribution of the zeros depends on the asymptotic ratio of the index to the weight on the interval $ [\frac{\pi}{2},\frac{2\pi}{3}]$, unlike the uniform distribution in the work of Duke and Jenkins and of Raveh.
\\\\
Numerical investigation of these polynomials revealed further remarkable properties. For example,
\begin{conjecture}\label{5/8}
    We denote $m_0, m_1, ..., m_{15} = 4, 3, 4, 5, 6, 7, 6, 7, 8, 9, 10, 9, 10, 11, 12, 13$. Take $\ell = 16 s + r > 30$ with $0 \le r < 16$. Let $m(k)$ be the minimal m for which $f_{k,m}$ has at least one zero not on the arc. Then $m(12 \ell) = 10 s + m_r$ and if $m \geq m(12\ell)$ at least one of the zeros is not on the arc.
\end{conjecture}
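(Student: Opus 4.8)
Since this is a conjecture I can only propose a line of attack. The statement has two independent halves: \textbf{(i)} for every $m < 10s + m_r$ all zeros of $f_{12\ell,m}$ lie on $\mathcal{A}$, and \textbf{(ii)} for every $m$ with $10s + m_r \le m \le \ell-1$ at least one zero lies off $\mathcal{A}$. I would prove \textbf{(ii)} from the moment identities of Theorem~\ref{linearity thm} and \textbf{(i)} by a Rankin--Swinnerton-Dyer / Duke--Jenkins style analysis on the arc; the heart of the conjecture is that the thresholds delivered by these two unrelated methods coincide down to the $16$-periodic correction $m_r$.

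\textbf{The off-arc half.} All zeros of $f_{12\ell,m}$ lying on $\mathcal{A}$ is equivalent to all $\ell-m$ roots $x_i$ of $F_{12\ell,m}$ lying in $j(\mathcal{A}) = [0,1728]$, and by Theorem~\ref{linearity thm} (with $k = 12\ell$, $k'=0$, so $C_n = 0$) the power sums $\sum_i x_i^n = 12 A_n \ell + B_n m$ are forced for all $n \le \ell-m$. Hence a necessary condition for ``all on $\mathcal{A}$'' is feasibility of the truncated Hausdorff moment problem on $[0,1728]$ with these moments, i.e.\ positive semidefiniteness of the Hankel matrices $\bigl(12 A_{a+b}\ell + B_{a+b}m\bigr)_{a,b}$ and $\bigl(1728(12 A_{a+b}\ell + B_{a+b}m) - (12 A_{a+b+1}\ell + B_{a+b+1}m)\bigr)_{a,b}$ of the largest size permitted by $n \le \ell-m$. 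Equivalently, using the closed form obtained above for the coefficients $e_n$ of $F_{12\ell,m}$, one asks when this explicit polynomial fails to have all roots in $[0,1728]$ --- decidable by a Sturm sequence, or by sign conditions such as ``$(-1)^{\ell-m}F_{12\ell,m}(-1) > 0$'' together with prescribed signs of $F_{12\ell,m}(0)$ and $F_{12\ell,m}(1728)$. The Corollary already uses the crudest instance, $\sum_i x_i \ge 0$, to get $m \le \tfrac{30}{31}\ell$; I would close the gap to $\tfrac{5}{8}\ell$ with the next conditions --- first the scalar constraint $\sum_i x_i^2 \le 1728\sum_i x_i$, then small Hankel minors --- and expect the exact break $10s + m_r$ to come out of the integer part that appears once the resulting rational upper bound on $m/\ell$ is intersected with $\Z$ (this is presumably where the period $16$ enters). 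The monotonicity built into \textbf{(ii)} is not free: I would show that a valid obstruction certificate at $m$ can be pushed to one at $m+1$ (where one fewer moment is available), splicing at the top with the Corollary's range $m > \tfrac{30}{31}\ell$.

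\textbf{The on-arc half.} For $\theta \in [\tfrac{\pi}{2},\tfrac{2\pi}{3}]$ the function $g(\theta) = e^{ik\theta/2} f_{12\ell,m}(e^{i\theta})$ is real-valued, its zeros there are precisely the zeros of $f_{12\ell,m}$ on $\mathcal{A}$, and since $F_{12\ell,m}$ has exactly $\ell-m$ roots and $j$ maps the fundamental domain bijectively to $\C$, it is enough to produce $\ell-m$ sign changes of $g$ on the open arc. Expanding $f_{12\ell,m} = q^m + O(q^{\ell+1})$ gives $g(\theta) = e^{-2\pi m\sin\theta}\cos\!\bigl(\tfrac{k}{2}\theta + 2\pi m\cos\theta\bigr) + (\text{tail})$; the main term has exactly $\ell-m$ sign changes, because $\tfrac{d}{d\theta}\bigl(\tfrac{k}{2}\theta + 2\pi m\cos\theta\bigr) = \tfrac{k}{2} - 2\pi m\sin\theta$ stays positive on the arc for $m < \tfrac{k}{4\pi}$ (comfortably above $\tfrac{5}{8}\ell$) and its total increase is exactly $\pi(\ell-m)$. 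The whole game is then to dominate $\sum_{n \ge \ell+1}|a_f(n)|\,e^{-2\pi n\sin\theta}$ by $e^{-2\pi m\sin\theta}$ between consecutive sign changes of the main term --- this is the estimate Raveh performs for $m < \tfrac{2}{9}\ell$, and I expect its extension to $m < \tfrac{5}{8}\ell$ to be the principal analytic difficulty, since the crude triangle bound on the tail is far too lossy and one must exploit cancellation, plausibly through the two-variable generating function of Duke--Jenkins for the $f_{k,m}$ or a contour-integral formula for $F_{12\ell,m}$. The formula of Theorem~\ref{limit dist} (proved for $c < \tfrac{2}{9}$ and expected to hold up to $c = \tfrac{5}{8}$) is a useful cross-check on any such asymptotic.

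\textbf{The main obstacle.} Beyond the tail estimate for \textbf{(i)}, the genuinely hard point is that the conjecture asserts the necessary condition from \textbf{(ii)} is also sufficient and that the transition occurs at precisely $10s + m_r$. Pinning the leading constant $\tfrac{5}{8}$ on both sides is plausibly reachable by the methods above, but proving that no zero leaves $\mathcal{A}$ even one index early --- and in particular explaining the period-$16$ sequence $m_0,\dots,m_{15}$, an arithmetic feature invisible to smooth asymptotics --- is where I expect the real work, and it may require a genuine understanding of the top coefficients $e_n$ of $F_{12\ell,m}$ (those with $n$ close to $\ell-m$).
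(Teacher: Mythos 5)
This statement is a conjecture in the paper, verified numerically for $\ell\le 300$; the paper offers no proof, so there is nothing to compare your strategy against. Judged on its own terms, your proposal is an honest research program rather than a proof, but one of its two pillars is not merely incomplete --- it cannot work. You propose to establish the ``off-arc'' half (some zero leaves $\mathcal{A}$ once $m\ge 10s+m_r$, i.e.\ asymptotically once $m/\ell>\tfrac58$) from the moment identities of Theorem~\ref{linearity thm} via Hankel positivity for the truncated Hausdorff problem on $[0,1728]$. But the paper's own Theorem~\ref{limit dist} and the remark following it show that for every $c<\tfrac{3}{\pi}\approx 0.955$ the limiting moments $M_n=\frac{12A_n+cB_n}{1-c}$ \emph{are} the moments of a genuine probability density supported on $j(\mathcal{A})=[0,1728]$, namely $f_c$ with angular density proportional to $\tfrac{3}{\pi}-c\sin\theta\ge 0$. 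Consequently every necessary condition extractable from the power sums alone --- positivity of individual moments, $\sum x_i^2\le 1728\sum x_i$, all Hankel minors, Sturm-type certificates built from the $e_n$ via Newton's identities --- is satisfied asymptotically throughout $\tfrac58<c<\tfrac{3}{\pi}$. The moment method saturates at $\tfrac{3}{\pi}$ (which is exactly what the paper's last corollary in Part~4 achieves) and is structurally blind to the conjectured transition at $\tfrac58$; the obstruction in the window $(\tfrac58,\tfrac{3}{\pi})$ must be of a different nature, invisible to the spectrum of $\{x_i\}$ as a multiset on $\mathbb{R}$ (indeed the zeros presumably leave the real $j$-line entirely, which no real-moment feasibility test detects when the moments remain consistent with a measure on $[0,1728]$).

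The ``on-arc'' half of your plan (extending the Rankin--Swinnerton-Dyer/Raveh sign-change count from $m<\tfrac29\ell$ up to $m<\tfrac58\ell$) is the right general shape but is exactly the open analytic problem, as you acknowledge: the tail estimate is the whole difficulty and nothing in the paper supplies it. Finally, the arithmetic fine structure --- the period-$16$ sequence $m_0,\dots,m_{15}$ and the exactness of the threshold $10s+m_r$ --- is untouched by either half. So the proposal does not constitute a proof, and its off-arc component should be replaced by a method that sees more than the real moments.
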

\begin{conjecture}\label{no big zeros}
    The modular forms in the Miller basis have no zeros on the line $\Re{\tau} = 0$. Equivalently, the Faber polynomials of these modular forms have no zeros in $[1728, \infty)$.
\end{conjecture}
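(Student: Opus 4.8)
The first move is to restate the claim analytically: on the line $\Re\tau=0$ we have $\tau=iy$ with $q=e^{-2\pi y}\in(0,e^{-2\pi})$ real and positive, so $f_{k,m}(iy)$ is a real number, and for $y>1$ none of $\Delta,E_4,E_6$ vanishes (their only zeros in the fundamental domain are at $i$ and $\rho$), while each $E_{k'}$ with $k'\in\{0,4,6,8,10,14\}$ is a monomial in $E_4,E_6$; hence $\Delta^{\ell}(iy)E_{k'}(iy)>0$ for $y>1$, and $F_{k,m}$ has a zero in $(1728,\infty)$ precisely when $f_{k,m}(iy)=0$ for some $y>1$. Since $f_{k,m}(iy)\sim e^{-2\pi m y}>0$ as $y\to\infty$, the conjecture is equivalent to the positivity statement $f_{k,m}(iy)>0$ for all $y>1$ (equivalently $g(iy)>0$, writing $f_{k,m}=\Delta^{m}g$ with $g\in M_{12(\ell-m)+k'}$ normalised to constant term $1$). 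For $m<\tfrac{2}{9}\ell$ this is already known, since by Raveh \cite{Roei's paper} all $\ell-m$ roots of $F_{k,m}$ then lie in $[0,1728]$; the content of the conjecture is concentrated in the range where roots have been allowed to leave the arc.

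The plan for that range rests on two ingredients. The first is a quantitative consequence of Theorem \ref{linearity thm}. Since $j$ is bounded by $1728$ on the real locus, attaining that value only at $\tau=i$, one gets $0<A_n\le\tfrac{1728^n}{12}$ (the arc has length $\tfrac{\pi}{6}$), $|C_n(k')|\le\tfrac{1728^n}{2}$, and — because $qj$ has positive $q$-coefficients, so $|j|\le j(i)=1728$ on the circle $|q|=e^{-2\pi}$ — also $|B_n|\le 1728^n$. Hence for all $n\le\ell-m$,
$$\Bigl|\sum_{i=1}^{\ell-m}x_i^{n}\Bigr|=|A_nk+B_nm+C_n(k')|\le 1728^n\Bigl(\tfrac{k}{12}+m+\tfrac12\Bigr)\le(2\ell+2)\,1728^n.$$
If $F_{k,m}$ had a real root $R>1728$ whose contribution to the power sums is not cancelled by the remaining roots — for instance if all roots are real, so that $\sum_i x_i^{n}\ge R^n$ for every even $n$ — then $R^n\le(2\ell+2)1728^n$ for all even $n\le\ell-m$, forcing $R\le 1728\,(2\ell+2)^{2/(\ell-m)}$. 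So, modulo the cancellation issue, the moments already confine any zero of $f_{k,m}$ on the imaginary axis to a neighbourhood $1<y<1+\varepsilon_{k,m}$ of $\tau=i$ that shrinks as soon as $\ell-m$ grows faster than $\log\ell$.

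The second ingredient is a local version, near $\tau=i$, of the Rankin--Swinnerton-Dyer / Duke--Jenkins oscillation method. On the arc one writes $e^{ik\theta/2}f_{k,m}(e^{i\theta})$ as a real main term of the shape $2\cos(\tfrac{k\theta}{2}+\cdots)$ plus a controlled error and counts its $\ell-m$ sign changes; continuing through $\tau=i$ onto $\{iy:y>1\}$ the oscillatory main term is replaced by a manifestly single-signed (hyperbolic) one, and one only needs the error to stay below it so that $f_{k,m}(iy)$ cannot change sign. The decisive simplification compared with the arc is that here the main term must beat the error just \emph{once}, rather than at $\ell-m$ interlacing nodes, so one may hope the estimate survives for all $m$; combined with the first ingredient, which has already trapped any offending zero next to $\tau=i$, a short-interval error bound would finish the argument. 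In the complementary regime $\ell-m=O(\log\ell)$, where the first ingredient is empty, one would instead use Rudnick's \cite{large m} description of the zeros as lying on $\ell-m$ vertical lines of height $\sim\log k$ and show that $\Re\tau=0$ is not among those lines.

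The main obstacle is the same one that blocks the extension of the ``all zeros on the arc'' theorem past $m\approx\tfrac{2}{9}\ell$: controlling the Rankin--Swinnerton-Dyer error term uniformly in $y$ down to $y=1$ and uniformly in the degree $\ell-m$, which is precisely where the arc analysis fails when $\ell-m$ is large relative to $k$. A second genuine difficulty is closing the cancellation loophole in the moment argument — ruling out configurations in which roots that have drifted onto the negative axis or into conjugate complex pairs balance a root in $(1728,\infty)$ across all of $p_1,\dots,p_{\ell-m}$; the natural route is to bootstrap from $|p_n|\le(2\ell+2)1728^n$ (valid for every $n\le\ell-m$), via Newton's identities, to an a priori bound on $\max_i|x_i|$, which would push any positive real root back toward $1728$ and hence into the range handled by the local oscillation analysis.
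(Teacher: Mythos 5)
This statement is Conjecture \ref{no big zeros} of the paper: it is not proved there, only verified numerically for $\ell\le 300$, so there is no ``paper proof'' to compare against. Your submission is, by its own account, a plan rather than a proof, and the two gaps you flag are genuine and are not closed. The preliminary reduction is fine (on $\Re\tau=0$ one has $q=e^{-2\pi y}>0$, the factor $\Delta^{\ell}E_{k'}$ is positive for $y>1$ since $E_{k'}$ is a monomial in $E_4,E_6$ with no zeros on $\{iy:y>1\}$, and $j$ maps that ray onto $(1728,\infty)$), and so are the bounds $0<A_n\le 1728^n/12$, $|B_n|=c_n(0)\le 1728^n$, $|C_n(k')|\le 1728^n/2$, hence $|p_n|\le(2\ell+2)1728^n$ for $n\le\ell-m$. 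But everything after that is conditional.

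The first gap is the cancellation issue, and it is structural rather than technical: the conjecture only has content in the regime where roots of $F_{k,m}$ provably leave $[0,1728]$ (by Corollary \ref{30/31} this happens already for $m>\tfrac{30}{31}\ell+O(1)$, and conjecturally for $m\gtrsim\tfrac58\ell$), so the hypothesis ``all roots real'' under which you deduce $R^n\le\sum_i x_i^n$ for even $n$ is exactly the hypothesis that fails. Complex conjugate pairs $re^{\pm i\phi}$ contribute $2r^n\cos(n\phi)$, which can be large and negative at even $n$, so $p_n\ge R^n$ does not follow. The proposed repair --- bootstrapping $|p_n|\le(2\ell+2)1728^n$ through Newton's identities to a bound on $\max_i|x_i|$ --- cannot give what you need: already $|e_1|=|p_1|$ can be as large as $(2\ell+2)\cdot 1728$, so Cauchy-- or Fujiwara--type root bounds derived from the $e_n$ only yield $\max_i|x_i|=O(\ell)\cdot 1728$, not $1728(1+o(1))$; confining a putative root of $(1728,\infty)$ to a shrinking neighbourhood of $1728$ would require ruling out cancellation among the other $\ell-m-1$ roots, which is the whole difficulty. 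The second gap is that the ``local Rankin--Swinnerton-Dyer argument near $\tau=i$'' is not carried out at all; as you note, uniform control of the error term near $y=1$ for $m$ up to $\ell-1$ is precisely what blocks extending the on-the-arc results past $m\approx\tfrac29\ell$, so invoking it here assumes the hardest unproven input. In short: the moment bounds are a correct (and potentially useful) partial observation, but the argument does not establish the conjecture.
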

These conjectures have been verified for $\ell \le 300$.
\section*{Organization of the paper}
Part 2 will introduce the needed background, including how to compute Faber polynomials.
\\\\
In Part 3 we will prove Theorem \ref{linearity thm}, computing $A_n$, $B_n$ and $C_n$ and give a new formula for the coefficients of the Faber polynomials.
\\
The computations of $B_n$ will be algebraic and only based on the coefficients of $\Delta$ and the $j$ invariant.
\\\\
Part 4 will prove the upper bound that can be achieved using Theorem \ref{linearity thm} on the value of $m$ for which all the zeros of $f_{k,m}$ are on the arc.
\\\\
Part 5 will discuss the limit distribution for the zeros of $f_{k,m}$ in the case of $m \sim c\ell$ using Theorem \ref{linearity thm} and an additional proof using Raveh \cite{Roei's paper}. This result can be used for an additional proof of the formula for $B_n$.
\\\\
\textbf{Acknowledgements:}
\\\\
This research was supported by the Israel Science Foundation (grant No. 2860/24).
\\\\
This work is part of my M.Sc. thesis. I would like to thank my advisor, Zeev Rudnick, for all of his advice, helpful suggestions and immense support during the process of research and writing of this paper.
\\\\
I also thank Alon Nishry for many helpful comments.
\section{Background}

Denote by $\HH=\{\tau:\Im(\tau)>0\}$ the upper half-plane.
\\\\
Let $k\ge 4$ be an even integer. A modular form of weight $k$ is a holomorphic function $f:\HH\to\C$, such that 
$$
    f\left(\frac{a\tau+b}{c\tau +d}\right)=\left(c\tau+d\right)^{k}f(\tau)
$$
for all 
$\begin{psmallmatrix}
    a & b \\
    c & d
    \end{psmallmatrix} \in \text{SL}_{2}(\Z)
$, and $f$ is bounded as $\Im(\tau)\to\infty$.
\\\\
Equivalently, we can replace the functional equation in the previous definition with the equations
$$
    f(\tau)=f(\tau+1), \quad f(-1/\tau)=\tau^{k}f(\tau)
$$
as they generate the action of $SL_2(\Z)$ on the upper half plane. We denote $f(i\infty)=\lim_{\Im(\tau)\to\infty}f(\tau)$.
\\\\
As $f(\tau) = f(\tau + 1)$, it has an expansion in terms of $q = e^{2 \pi i \tau}$,
$$f(\tau) = \sum_{n = 0}^{\infty} a(n) q^n.$$
For weight $k = 12\ell + k'$, the \textit{Miller basis} of weight $k$ is the sequence of modular forms of weight $k$, $\{f_{k,m}\}_{m=0}^{\ell}$ for which
$$f_{k,m} = q^{m} + O(q^{\ell + 1}).$$
An important example of a sequence of modular forms are the Eisenstein series. For $k \ge 4$ even we denote
$$E_{k}(\tau) = \frac{1}{2}\sum_{\gcd(c,d)=1}\frac{1}{(c\tau+d)^{k}}= 1+\gamma_{k}\sum_{n=1}^{\infty}\sigma_{k-1}(n)q^{n} \in M_k$$
where $\sigma_{s}(n)=\sum_{d\mid n}d^{s}$ and $\gamma_{k}=\frac{(2\pi i)^k}{\zeta(k) (k-1)!} \in \Q$.
\\\\
Another important example is the modular discriminant $\Delta \in M_{12}$, 
$$\Delta(\tau) = q \prod_{n = 1}^{\infty} (1-q^n)^{24} = q - 24q^2 + 252q^3 + ...$$
We also define the space of modular functions as holomorphic functions on the upper half plane that are invariant under the action of $SL_2(\Z)$ as defined for modular forms, where we allow a pole at the cusp $\Im(\tau)\to \infty$. By the same argument they can also be written as a sum in $q$, but can have negative powers of $q$ in the sum.
\\\\
All modular functions are a polynomial in the $j$ invariant
$$j = \frac{E_4^3}{\Delta} =\frac{1}{q} + 744 + 196884 q + ...$$
which is meromorphic at infinity. 
\\\\
Any modular form can be written as
$$f = \Delta^{\ell} E_{k'}\cdot F_f(j)$$
where $k = 12\ell + k'$, $k' \in \{0,4,6,8,10,14\}$ and $\deg(F_f) = \ell - \mathrm{ord}_{\infty}(f)$.
\\\\
This gives us a way to study the zeros of a modular form by studying the zeros of a polynomial.
\subsection{Computing the Faber Polynomial}
Take $f \in M_k(SL_2(\Z))$ and $m:=\mathrm{ord}_{\infty}(f)$.
\\\\
To compute the Faber polynomial of $f$, a modular form of weight k, we compare the Laurent series of $\frac{f}{\Delta^{\ell}E_{k'}}$ with a polynomial of $j$ of degree $\ell - m$. We can also compare the $q$ expansion of $f$ and $\Delta^\ell E_{k'} F(j)$ with $F$ of degree $\ell - m$.
\\\\
For example, we will look at the Faber polynomial of $f_{12\ell,\ell - 1} = q^{\ell - 1} + O(q^{\ell +1})$, which satisfy
$$q^{\ell - 1} + O(q^{\ell + 1}) = \Delta^{\ell} F(j) = (q^{\ell} - 24 \ell q^{\ell + 1} + O(q^{\ell + 2}))(e_0 j + e_1).$$
As $j = \frac{1}{q} + 744 + O(q)$ we can write $e_0 j + e_1 = \frac{e_0}{q} + (e_0 744 + e_1) + O(q)$ and
$$\biggl( q^{\ell} - 24 \ell q^{\ell + 1} + O(q^{\ell + 2}) \biggr) \left(\frac{e_0}{q} + (e_0 744 + e_1) + O(q)\right) = e_0 q^{\ell - 1}  + (e_0 744 + e_1 - e_0 24 \ell) q^{\ell} + O(q^{\ell + 1})$$
and we must have $e_0 
= 1$ and $(e_0 744 + e_1 - e_0 24 \ell) = 0$, so
$$F_{12\ell, \ell-1}(t) = t - 24\ell - 744.$$
More generally, we can use the following algorithm - 
\\\\
Take $f\in M_k$ with $m = \mathrm{ord}_{\infty}(f)$
$$f \Delta^{-\ell}E_{k'}^{-1} = F_{f}(j) = \sum_{n = 0}^{\ell - m} e_n j^{\ell - m - n}.$$
For both sides the non-zero exponent of $q$ with the smallest power is $q^{m - \ell}$. On the right-hand side it is the coefficient of $q^{m - \ell}$ that is only determined by $e_0$.
\\\\
We choose $e_0$ to be the coefficient of $q^{m - \ell}$ in the left-hand side. 
\\\\
Assume we chose the first $s$ coefficients. Then we look at
$$f \Delta^{-\ell}E_{k'}^{-1} - \sum_{n = 0}^{s-1} e_n j^{\ell - m - n} = \sum_{n = s}^{\ell - m} e_n j^{\ell - m - n}.$$
From our choices of $e_0,...,e_{s-1}$, the left-hand side is some constant times $q^{m + s - \ell} + O(q^{m + s -\ell + 1})$. The right-hand side is $e_s q^{m + s-\ell} + O(q^{m+s-\ell +1})$ based on the $q$ expansion of $j$. So we can choose $e_s$ to be the coefficient of $q^{m + s-\ell}$ in the left-hand side.
\\\\
Note that each $e_s$ is a polynomial in the variables $k,m$ whose coefficients depend on $f$, $k'$. Also, the $s+t$ coefficient of $f$ does not affect $e_s$ for $t > 0$.


%
\subsection{Additional Notation}
We will denote by $c_n(d)$ the coefficient of $q^{d}$ in $j^n$ and by $\tau_{-\ell}(n-\ell)$ the coefficient of $q^{n-\ell}$ in $\Delta^{-\ell}$.

\section{Proof of the Linearity Theorem}
%
%
%
%
%
\subsection{Proof of Theorem 1}
Let $f_{k,m} = q^m + O(q^{\ell +1})$ be the $m$-th modular form in the Miller basis of weight $k = 12\ell + k'$ and let $F_{k,m}$ be its Faber polynomial. 
\\\\
We will denote with $y_i = y_i(k,m)$ the zeros of the modular form $f_{k,m}$ and $x_i$ the zeros of its Faber polynomial, both counted with multiplicity. Note that $\{j(y_i)\}_i$ contain the zeros of the Faber polynomial, with $j$ of the zeros of the Eisenstein form of weight $k'$ added, as $f_{k,m} = \Delta^{\ell} E_{k'} F_{k,m}(j)$.
\\\\
We will denote $S_{n}(f_{k,m}) = \sum_{i=1}^{\ell - m}j(y_i)^n$. 
\begin{lemma}
    $S_n(f_{k,m})$ can be written as a polynomial $P_{n,k'}(k,m)$ with integer coefficients.
\end{lemma}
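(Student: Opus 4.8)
The plan is to track how the power sums $S_n(f_{k,m})$ are built up from the coefficients of the $q$-expansions of $\Delta$, $E_{k'}$, and $j$, all of which lie in $\Q$ (indeed the relevant combinations are integral). The key observation is the one already flagged in the algorithm of Section 2.1: the coefficients $e_0, e_1, \dots, e_{\ell-m}$ of the Faber polynomial $F_{k,m}$ are obtained one at a time, and each $e_s$ is a polynomial in $k$ and $m$ with rational coefficients depending only on $f_{k,m}$ and $k'$; moreover, because $f_{k,m} = q^m + O(q^{\ell+1})$, the only input from $f_{k,m}$ itself is the single leading coefficient $1$, so $e_s = e_s(k,m,k')$ with the intermediate $q$-expansion coefficients all coming from fixed power series. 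The first step is therefore to make this precise: using $\Delta^{-\ell} = q^{-\ell}\prod(1-q^n)^{-24\ell}$, whose coefficients are polynomials in $\ell$ (hence in $k$ once $k'$ is fixed) with integer coefficients, and $E_{k'}^{-1}$, which for each fixed $k'$ is a fixed power series with rational coefficients, one shows by induction on $s$ (exactly the induction in the algorithm) that each $e_s$ is a polynomial in $k$ and $m$.

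Next I would pass from the coefficients $e_s$ to the power sums. Since $S_n(f_{k,m}) = \sum_i j(y_i)^n$ is, up to the contribution of the zeros of $E_{k'}$ (which is a fixed finite set depending only on $k'$, contributing a constant), the $n$-th power sum of the roots of $F_{k,m}$, Newton's identities express $S_n$ as a polynomial with rational coefficients in $e_1/e_0, \dots, e_n/e_0$. Combined with the previous step, this already shows $S_n(f_{k,m}) = P_{n,k'}(k,m)$ for some polynomial $P_{n,k'}$ with rational coefficients. The remaining point — integrality of the coefficients — requires a slightly more careful bookkeeping: rather than divide by $e_0$, I would note that $e_0 = 1$ for the Miller basis (this follows from the normalization $f_{k,m} = q^m + \dots$ together with $\Delta^\ell E_{k'} = q^\ell + \dots$ and $j^{\ell-m} = q^{-(\ell-m)} + \dots$), so the Faber polynomial is monic. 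Then the power sums of its roots are $\Z$-polynomials in the $e_s$, and one needs the $e_s$ themselves to be $\Z$-polynomials in $k,m$; this in turn follows because $\Delta^{-\ell}$ has $q$-expansion coefficients in $\Z[\ell]$ (expand $\prod(1-q^n)^{-24\ell}$ and use that $\binom{-24\ell}{r} \in \Z[\ell]$), and for each $k' \in \{0,4,6,8,10,14\}$ the series $E_{k'}^{-1}$ and the powers of $j$ relevant here have integer $q$-coefficients, so every step of the algorithm stays in $\Z[k,m]$.

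I expect the main obstacle to be precisely this integrality claim, not the existence of the polynomial $P_{n,k'}$. One has to be honest about two things: first, that $E_{k'}^{-1}$ really has integer coefficients for each of the six values of $k'$ — this is true because $E_4, E_6, E_8, E_{10}, E_{14}$ all have integer $q$-expansions and constant term $1$, so their reciprocals do too, but it should be stated — and second, that taking the $\ell$-th or $(-\ell)$-th power and multiplying these series together keeps the dependence on $k$ polynomial with integer coefficients rather than merely rational. A clean way to organize this is to fix $k'$ once and for all, treat $\ell$ (equivalently $k$) and $m$ as formal indeterminates, carry out the Faber-polynomial algorithm symbolically in the ring $\Z[k,m][[q]][q^{-1}]$, and observe that it never leaves this ring; then apply Newton's identities, which are integral for monic polynomials. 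The zeros of $E_{k'}$ contribute only a fixed constant shift (zero when $k' \in \{0,4,8\}$ since then $E_{k'}$ is the constant $1$ or has no relevant zeros in the counting, and a $k'$-dependent constant otherwise), which is absorbed into $P_{n,k'}$.
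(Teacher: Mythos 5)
Your overall route is the same as the paper's: the recursive Faber-polynomial algorithm shows each coefficient $e_s$ is a polynomial in $k,m$ (for fixed $k'$), Newton's identities for a monic polynomial convert these into the power sums of the roots, and the zeros of $E_{k'}$ contribute a fixed constant (equal to $0$ or $\tfrac{1}{2}\cdot 1728^n$, an integer) depending only on $k'$. That part is correct and is exactly the paper's argument, so the polynomiality of $P_{n,k'}$ is established.

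The place where your write-up goes wrong is the integrality mechanism. The claim that $\binom{-24\ell}{r}\in\Z[\ell]$ is false: for example $\binom{-24\ell}{5}$ has leading coefficient $-24^5/5!=-7962624/120$, which is not an integer, because $5\nmid 24$. More generally the $q$-coefficients of $\Delta^{-\ell}$, and the Faber coefficients $e_s$ themselves, are \emph{integer-valued} polynomials in $\ell$ and $m$ but need not lie in $\Z[k,m]$ (already $e_2=\tfrac{1}{2}\bigl(P_1(k,m)^2-P_2(k,m)\bigr)$ by the paper's own formula, which has half-integer coefficients a priori). So the assertion that ``every step of the algorithm stays in $\Z[k,m]$'' does not hold, and integrality of the coefficients of $P_{n,k'}$ cannot be obtained this way. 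Integer-valuedness of a polynomial does not imply integrality of its coefficients, so something extra is needed; in the paper the integrality one actually uses is that of $A_n$, $B_n$, $C_n(k')$, which is only accessible \emph{after} the later lemmas establish that $P_{n,k'}(k,m)=A_nk+B_nm+C_n(k')$ is linear (at which point integer-valuedness on the relevant lattice of $(k,m)$, together with the explicit formulas for $A_n$ and $C_n$, pins the coefficients down). You should either drop the integrality claim from this step and defer it to the linear form, or replace the $\Z[k,m]$ argument with one about integer-valued polynomials.
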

\begin{proof}
    The coefficient of $t^{\ell - m -d}$ in the Faber polynomial can be written as a polynomial that depends on $n,k'$ in the variables $k,m$. 
    \\\\
    Using Newton's identities we can write the sum of the $n$th powers of the zeros of the Faber polynomial $\sum_i x_i^n$ as a polynomial in the coefficients of the Faber polynomial. The difference $S_n(f_{k,m}) -\sum_i x_i^n$ is the sum of the $n$th powers of $j$ of the zeros of $E_{k'}$, counted with multiplicity, which for a fixed $n$ is a constant that depends only on $k'$. So over all, we get that $S_n(f_{k,m})$ can be written as a polynomial $P_{n,k'}(k,m)$.
\end{proof}
\begin{lemma}
    Take $k = 12\ell$. For all $n \le \ell - m$,
    $$P_{n,0}(k,m) = A_n \cdot k + B_n \cdot m.$$
\end{lemma}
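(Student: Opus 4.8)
The plan is to exploit the multiplicative structure noted after Theorem~\ref{linearity thm}: the top $\ell-m+1$ coefficients of the Faber polynomial $F_{24\ell,2m}$ coincide with those of $F_{12\ell,m}^2$. To see this, run the algorithm of Section~2.1 on the two weight-$24\ell$ forms $f_{24\ell,2m}$ and $f_{12\ell,m}^2$ (both have $k'=0$ and order of vanishing $2m$ at the cusp, hence Faber polynomials of degree $2\ell-2m$; for the square, $f_{12\ell,m}^2=\Delta^{2\ell}F_{12\ell,m}(j)^2$, so its Faber polynomial is literally $F_{12\ell,m}(t)^2$). Since $f_{12\ell,m}=q^m+O(q^{\ell+1})$ has vanishing coefficients of $q^{m+1},\dots,q^{\ell}$, its square equals $q^{2m}+O(q^{m+\ell+1})$, whereas $f_{24\ell,2m}=q^{2m}+O(q^{2\ell+1})$; thus the two forms have the same coefficients of $q^{2m},q^{2m+1},\dots,q^{m+\ell}$. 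Because (by the remark ending Section~2.1) the coefficient $e_s$ output by the algorithm depends only on the coefficients of $q^{2m},\dots,q^{2m+s}$ of the input form, the two Faber polynomials share $e_0,e_1,\dots,e_{\ell-m}$.

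The next step is to pass to power sums. Newton's identities write the $n$-th power sum of the roots of a monic polynomial in terms of its top $n$ coefficients, so for every $n\le\ell-m$ the $n$-th power sums of the roots of $F_{24\ell,2m}$ and of $F_{12\ell,m}^2$ agree; the roots of $F_{12\ell,m}^2$ are those of $F_{12\ell,m}$ with multiplicities doubled, and for $k'=0$ the roots of $F_{k,m}$ are exactly the $j(y_i)$, so this reads
$$S_n(f_{24\ell,2m})=2\,S_n(f_{12\ell,m}),\qquad 0\le m\le \ell-n,$$
that is, $P_{n,0}(24\ell,2m)=2\,P_{n,0}(12\ell,m)$ on that range by Lemma~1. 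For each of infinitely many $\ell$ the admissible $m$ fill an interval of integers, so a polynomial vanishing there vanishes identically, and we obtain the polynomial identity $P_{n,0}(2k,2m)=2\,P_{n,0}(k,m)$. Writing $P_{n,0}=\sum_{a,b\ge0}c_{ab}k^am^b$ and comparing coefficients yields $c_{ab}(2^{a+b}-2)=0$, hence $c_{ab}=0$ unless $a+b=1$; in particular $P_{n,0}$ has no constant term and $P_{n,0}(k,m)=A_nk+B_nm$ with $A_n:=c_{10}$ and $B_n:=c_{01}$.

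The step demanding the most care is the opening one: one must check, straight from the algorithm, that $e_s$ uses no coefficient of the input beyond that of $q^{2m+s}$, and that the coefficients of the auxiliary series $\Delta^{-2\ell}$ and of the powers of $j$ are polynomial in $\ell$, so that $P_{n,0}$ is genuinely a single polynomial valid uniformly in $\ell$ (as Lemma~1 claims). The constants $A_n,B_n$ play no role in the present lemma but are identified afterwards: with $m=0$ all zeros of $f_{12\ell,0}$ lie on $\mathcal{A}$ and become uniformly distributed on $\mathcal{A}$ (Duke--Jenkins), so from $12A_n\ell=S_n(f_{12\ell,0})=\sum_i j(y_i)^n$, dividing by $\ell$ and letting $\ell\to\infty$ gives $12A_n=\tfrac{6}{\pi}\int_{\pi/2}^{2\pi/3}j(e^{i\theta})^n\,d\theta$, i.e.\ $A_n=\tfrac1{2\pi}\int_{\mathcal{A}}j^n$; and $B_n$ comes out of the purely algebraic computation of Section~3 (based only on the $q$-expansions of $\Delta$ and $j$), which gives $-B_n$ equal to the coefficient of $q^0$ in $j^n$.
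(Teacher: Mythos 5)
Your proof is correct and follows essentially the same route as the paper's: the paper compares $f_{12\ell,m}^t$ with $f_{12t\ell,tm}$ for \emph{every} integer $t>0$ to get the relation $t\,P_{n,0}(k,m)=P_{n,0}(tk,tm)$ and hence homogeneous linearity, whereas you specialize to $t=2$ and correctly observe that the single identity $P_{n,0}(2k,2m)=2P_{n,0}(k,m)$ already forces $c_{ab}=0$ unless $a+b=1$. All the remaining ingredients (agreement of the top $\ell-m+1$ coefficients of the $q$-expansions and hence of the Faber polynomials, Newton's identities for $n\le\ell-m$, and the density argument promoting the relation on integer points to a polynomial identity) coincide with the paper's argument.
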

\begin{proof}
    Consider $f_{12\ell, m}$ and note that the first $\ell - m + 1$ coefficients of the $q$ expansion of 
    $$f_{12\ell, m}^t = q^{tm} + O(q^{\ell + (t-1)m + 1})$$
    and 
    $$f_{12t\ell, tm} = q^{tm} + O(q^{t\ell + 1})$$
    are the same, for a fixed integer $t>0$.
    \\\\
    This means that the first $\ell - m + 1$ coefficients of their Faber polynomials are the same, and by Newton's identities they have the same sum of the $n$th powers of their roots, for $ n \le \ell - m$. As we are looking at $f_{12\ell, m}^t$, we have $t\cdot S_n(f_{k,m}) = S_n((f_{k,m}^t))$ and so for $n\le \ell - m$,
    $$t \cdot P_{n,0}(12\ell, m) = P_{n,0}(t\cdot 12\ell,tm).$$
    This implies the polynomial is linear and homogeneous, and we can write 
    $$S_n(f_{k,m}) = \sum_{i=1}^{\ell - m}j^{-1}(y_i)^n = A_n \cdot k + B_n \cdot m$$ 
    for some constants $A_n, B_n$.
\end{proof}
We will now get the same result for all values of $k'$.
\begin{lemma}
    For all $n \le \ell - m$,
    $$P_{n,k'}(k,m) = A_n \cdot k + B_n \cdot m.$$
\end{lemma}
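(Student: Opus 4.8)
\emph{Plan.} The strategy is to reduce to the case $k'=0$ settled in Lemma~2, by raising $f_{k,m}$ to a power whose weight is divisible by $12$, in the same spirit as the proof of Lemma~2. Fix $k'$ and pick an integer $t\ge1$ with $12\mid tk'$; one may always take $t=6$ (and $t=1$ when $k'=0$). Put
$$L:=\frac{tk}{12}=t\ell+\frac{tk'}{12}\in\Z,$$
so that $g:=f_{k,m}^{\,t}$ is a modular form of weight $12L$ (that is, with vanishing ``$k'$-part'') and $\mathrm{ord}_\infty g=tm$.

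\emph{The two ingredients.} First, $S_n$ is additive under multiplication of forms: the zeros of $g$ are exactly those of $f_{k,m}$, each with order multiplied by $t$, and the associated $j$-values are unchanged, so $S_n(g)=t\,S_n(f_{k,m})$ for every $n$. Second, $g$ shares the first several $q$-coefficients of the Miller form $f_{12L,\,tm}$: from $f_{k,m}=q^m+O(q^{\ell+1})$ we get $g=q^{tm}+O(q^{\ell+(t-1)m+1})$, so the $\ell-m+1$ coefficients of $g$ from $q^{tm}$ onward equal $1,0,\dots,0$, exactly as for $f_{12L,tm}=q^{tm}+O(q^{L+1})$; there is room for this because $L-tm=t(\ell-m)+\tfrac{tk'}{12}\ge\ell-m$. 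By the Faber-polynomial algorithm of Section~2 the first $\ell-m+1$ coefficients of the Faber polynomials of $g$ and of $f_{12L,tm}$ then coincide, and hence, by Newton's identities, so do the power sums of their Faber roots for all exponents $n\le\ell-m$. Since the weight of $g$ is a multiple of $12$, $g$ carries no Eisenstein factor, so (by the reasoning in the proof of Lemma~1) $S_n(g)$ equals the $n$-th power sum of the roots of the Faber polynomial of $g$; the same is true of $f_{12L,tm}$. Combining these, $S_n(g)=S_n(f_{12L,tm})=P_{n,0}(12L,tm)$ for $n\le\ell-m$, and Lemma~2, applied with $L$ in place of $\ell$ and $tm$ in place of $m$ (the hypothesis $n\le L-tm$ holds since $n\le\ell-m\le L-tm$), gives $P_{n,0}(12L,tm)=A_n\cdot 12L+B_n\cdot tm$.

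\emph{Conclusion.} Putting the two ingredients together, for every $n\le\ell-m$,
$$t\,S_n(f_{k,m})=S_n(g)=A_n\cdot 12L+B_n\cdot tm;$$
dividing by $t$ and using $12L/t=k$ yields $P_{n,k'}(k,m)=S_n(f_{k,m})=A_n k+B_n m$, with the same $A_n,B_n$ as in Lemma~2. The points needing care are the inequality $L-tm\ge\ell-m$ — which is what lets Lemma~2 and the coefficient comparison be invoked throughout the range $n\le\ell-m$ — together with the verification that the weighted count $S_n$ is genuinely multiplicative in the stated sense and, for a weight divisible by $12$, coincides with the Faber-root power sum. Alternatively one can run the argument keeping the Eisenstein factor explicit, using $E_4^3=\Delta j$ and $E_6^2=\Delta(j-1728)$ to write $E_{k'}^{\,t}=\Delta^{tk'/12}G_{k'}(j)$ with $G_{k'}$ a monic polynomial supported on the roots $0$ and $1728$; this variant additionally identifies the constant $C_n(k')$ of Theorem~\ref{linearity thm} as $-1/t$ times the contribution of the root $1728$ of $G_{k'}$.
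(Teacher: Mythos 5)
Your proposal is correct and follows essentially the same route as the paper: the paper raises $f_{k,m}$ to the $12$th power and compares its leading $q$-coefficients with those of $f_{12k,12m}$, then invokes Newton's identities, the multiplicativity $S_n(f^t)=tS_n(f)$, and the $k'=0$ case, exactly as you do with a general exponent $t$ satisfying $12\mid tk'$. Your additional verifications (the inequality $L-tm\ge \ell-m$ and the reduction of power sums to the leading Faber coefficients) are correct refinements of the same argument.
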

\begin{proof}
    Take $f_{12\ell + k', m}$ for some $k' \in \{4,6,8,10,14\}$. Note that the first $\ell - m + 1$ coefficients of the $q$ expansion of 
    $$f_{k, m}^{12} = q^{12m} + O(q^{\ell + 11m + 1})$$
    and 
    $$f_{12 \cdot k, 12m} = q^{12m} + O(q^{k+1})$$
    are the same.
    \\\\
    As before, this means that the first $\ell - m + 1$ coefficients of their Faber polynomials are the same, and they have the same sum of the $n$th powers of their roots, for $ n \le \ell - m$. This implies that for $n\le \ell - m$,
    $$S_n((f_{k,m})^{12}) = S_n(f_{12 \cdot k,12m}) = 12 \cdot S_n(f_{k,m}).$$
    From the previous lemma,
    $$S_n(f_{k,m}) = \frac{1}{12}S_n(f_{12 \cdot k,12m}) = \frac{1}{12} (A_n \cdot 12 k + B_n \cdot 12 m) = A_n \cdot k + B_n \cdot m.$$
\end{proof}
\begin{lemma}
    For all $n \le \ell - m$,
    $$\sum_{i=1}^{\ell - m} x_i^n = A_n \cdot k + B_n \cdot m + C_n(k')$$
    where $C_n(0) = C_n(4) = C_n(8) = 0$ and $C_n(6) = C_n(10) = C_n(14) = -\frac{1728^n}{2}$ and $A_n \in \Z$.
\end{lemma}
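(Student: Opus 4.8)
The plan is to read the statement off from the preceding lemmas once the zeros of the Eisenstein series $E_{k'}$ are pinned down. By the lemma just proved, $S_n(f_{k,m}) = A_n k + B_n m$ for all $n \le \ell - m$, where (as in the proof of Lemma 1) $S_n(f_{k,m})$ is the sum of $j(\cdot)^n$ over the zeros of $f_{k,m}$ in $\HH$, each counted with multiplicity and with the valence-formula weight $w(z)$ — namely $w(z)=1$ at an ordinary point, $\tfrac12$ at $i$, and $\tfrac13$ at $\rho := e^{2\pi i/3}$. Since $f_{k,m} = \Delta^{\ell} E_{k'} F_{k,m}(j)$ and $\Delta$ does not vanish on $\HH$, the zeros of $f_{k,m}$ in $\HH$ are, as a weighted multiset, the zeros of $E_{k'}$ together with the $j$-preimages of the roots of $F_{k,m}$. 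Because $j$ is unramified on $\HH$ away from $i,\rho$, and its ramification there ($j-1728$ has a double zero at $i$ and $j$ a triple zero at $\rho$) exactly cancels the fractional weights, the latter family contributes $\sum_i x_i^n$, each root of $F_{k,m}$ entering with its polynomial multiplicity. Hence
$$\sum_{i=1}^{\ell - m} x_i^n \;=\; A_n k + B_n m + C_n(k'), \qquad\text{where}\qquad C_n(k') := -\sum_{z:\,E_{k'}(z)=0} w(z)\,\mathrm{ord}_z(E_{k'})\,j(z)^n,$$
the sum running over $z \in \HH$ modulo $SL_2(\Z)$; thus $C_n(k')$ depends only on $n$ and $k'$, and it is exactly the ``constant depending only on $k'$'' from the proof of Lemma 1, now made explicit.

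It remains to evaluate $C_n(k')$ for $n \ge 1$. Since $\dim M_{k'}=1$ for each $k' \in \{0,4,6,8,10,14\}$, the form $E_{k'}$ is the unique weight-$k'$ modular form with constant term $1$, so $E_0=1$, $E_8=E_4^2$, $E_{10}=E_4 E_6$, and $E_{14}=E_4^2 E_6$. By the valence formula, $E_4$ vanishes on $\HH$ only at $\rho$ to first order, and $E_6$ only at $i$ to first order; consequently $E_{k'}$ has a zero over $i$, contributing $w(i)\,\mathrm{ord}_i(E_{k'}) = \tfrac12$, exactly when $k' \in \{6,10,14\}$, while all its other zeros lie over $\rho$. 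Since $j(\rho)=0$ and $j(i)=1728$, the $\rho$-contributions are multiples of $0^n=0$ (here $n\ge 1$ is used) and the $i$-contribution, when present, equals $\tfrac12\cdot 1728^n$. Therefore $C_n(0)=C_n(4)=C_n(8)=0$ and $C_n(6)=C_n(10)=C_n(14)=-\frac{1728^n}{2}$, as asserted. Finally, taking $k'=0$ makes $C_n(k')=0$, so $\sum_i x_i^n = A_n k + B_n m = P_{n,0}(k,m)$, which has integer coefficients by Lemma 1; comparing the coefficients of $k$ and $m$ yields $A_n, B_n \in \Z$ (for instance $F_{12,0}(t)=t-720$ forces $A_1=60$).

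The step needing the most care is the interplay of the valence weights with the ramification of $j$ at the elliptic points: one must check both that a simple zero of $E_6$ at $i$ enters $S_n$ with weight $\tfrac12$ rather than $1$ — this is precisely what produces the factor $\tfrac12$ in $C_n(6), C_n(10), C_n(14)$ — and that, under the same convention, a root of $F_{k,m}$ of multiplicity $\mu$ whose $j$-preimage happens to be $i$ or $\rho$ still contributes exactly $\mu\,x_i^n$ to $S_n$; together these make the splitting $S_n = \sum_i x_i^n + (\text{Eisenstein part})$ legitimate. Everything past that point is the short computation above.
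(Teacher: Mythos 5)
Your proposal is correct and follows essentially the same route as the paper: split $\sum_i x_i^n$ from $S_n(f_{k,m})$ by isolating the zeros of $E_{k'}$, then evaluate their contribution using $j(e^{2\pi i/3})=0$, $j(i)=1728$, and the weight-$\tfrac12$ zero of $E_6$ at $i$, with integrality of $A_n$ read off from the $k'=0$ case. Your explicit check that the valence weights at the elliptic points cancel against the ramification of $j$ (so a root of $F_{k,m}$ over $i$ or $\rho$ still enters with its polynomial multiplicity) is a point the paper passes over silently, but it is the same argument.
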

\begin{proof}
    The difference $\sum_i x_i^n - S_n(f_{k,m})$ is the sum of the $n$th powers of $j$ of the zeros of $E_{k'}$. 
    \\\\
    For $k' = 0$, $E_0 = 1$ has no zeros, so the difference is $0$. For $k' \in \{4,8\}$, $E_{k'}$ vanishes only at $e^{2\pi i/3}$ and as $j(e^{2\pi i/3}) = 0$ the difference is $0$. As for $k' \in \{6,10,14\}$, $E_{k'}$ also has a zero at $i$ of order $\frac{1}{2}$. Since $j(i) = 1728$ and the order of vanishing is $\frac{1}{2}$, the difference is $-\frac{1728^n}{2}$.
    \\\\
    This means that 
    $$\sum_{i=1}^{\ell - m} x_i^n = A_n \cdot k + B_n \cdot m + C_n(k')$$
    with the values of $C_n(k')$ as above.
    \\\\
    As the coefficients of $f_{k,m}$ are integers, the coefficients of the Faber polynomial are integers. Newton's identities only use integer coefficients, so the coefficients of $P_{n,k'}(k,m)$ are integers so is $A_n$.
\end{proof}
So to finish proving Theorem \ref{linearity thm} we only need to prove the formulas for the constants $A_n, B_n$.
\begin{remark}
    Define the normalized counting function of the zeros
    $$
    \mu_{k,m} = \frac{1}{\ell-m}\sum_{i=1}^{\ell - m}\delta_{x_i(k,m)}.
    $$
    This is a probability measure with $n$th moment
    $$M_n(k,m) = 
    \frac{1}{\ell - m}\sum_{i=1}^{\ell - m} x_i^n$$
    which we just showed is equal to
    $$M_n(k,m) = \frac{1}{\ell - m} (A_n\cdot k + B_n \cdot m + C_n(k')).$$
    This means that results about the distribution can be written in the language of these sums.

\end{remark}
\begin{remark}
    Theorem \ref{linearity thm} gives a strong structure to the zeros of Faber polynomials. We can see that in the limit as $k \to\infty$, the moments of the zeros only depend on the ratio $\frac{m}{\ell}$, as 
    $$M_n(k,m) = \frac{1}{\ell- m} (A_n \cdot k + B_n \cdot m + C_n(k')) =\frac{1}{1- \frac{m}{\ell}} (12\cdot A_n + B_n(k') \cdot \frac{m}{\ell} + \frac{C_n(k')}{\ell}).$$
    As for $k' = 0,4,8$ we know $C_n(k') = 0$ and we don't even need to take a limit. So for $k' = 0,4,8$,
    $$M_n(12\ell + k',m) = \frac{1}{1- \frac{m}{\ell}} (12 \cdot A_n + B_n \cdot \frac{m}{\ell}) $$
    and the average of the powers of the roots is fixed if we multiply both $\ell$ and m by the same constant.
\end{remark}

\begin{corollary}
    We write $F_{k,m}(t) = t^{\ell - m} + e_1 t^{\ell - m - 1} + ... + e_{\ell - m}$ for the Faber polynomial.
    \\\\
    Then, using Newton's identities 
    $$e_n = \sum_{t_1 + 2 t_2 + ... + n t_n = n} \prod_{s = 1}^n (-1)^{t_s} \frac{(A_s \cdot k + B_s \cdot m + C_s(k'))^{t_s}}{t_s ! s^{t_s}}.$$
\end{corollary}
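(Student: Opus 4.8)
The plan is to derive this directly from Theorem~\ref{linearity thm} together with the closed (multinomial) form of Newton's identities, which expresses the elementary symmetric functions of a finite multiset of numbers in terms of their power sums. Write $p_s = \sum_{i=1}^{\ell-m} x_i^s$ for the power sums of the zeros of $F_{k,m}$. By Theorem~\ref{linearity thm}, $p_s = A_s\cdot k + B_s\cdot m + C_s(k')$ for every $s\le \ell-m$; since every index $s$ that occurs below will satisfy $s\le n\le \ell-m$, it suffices to prove the purely formal identity expressing $e_n$ as the stated polynomial in $p_1,\dots,p_n$ and then substitute.

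First I would establish the Newton identity via generating functions. Since $F_{k,m}(t)=\prod_{i=1}^{\ell-m}(t-x_i)$, writing $\sigma_n$ for the $n$th elementary symmetric polynomial in the $x_i$ (so that $e_n=(-1)^n\sigma_n$), one has
$$\sum_{n\ge 0}\sigma_n z^n=\prod_{i=1}^{\ell-m}(1+x_i z)=\exp\left(\sum_{i=1}^{\ell-m}\log(1+x_i z)\right)=\exp\left(\sum_{s\ge 1}\frac{(-1)^{s+1}}{s}\,p_s\,z^s\right),$$
where the last equality uses $\log(1+x_i z)=\sum_{s\ge 1}\frac{(-1)^{s+1}}{s}(x_i z)^s$ and summation over $i$. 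Expanding the exponential as the product $\prod_{s\ge 1}\sum_{t_s\ge 0}\frac{1}{t_s!}\left(\frac{(-1)^{s+1}}{s}p_s\right)^{t_s}z^{s t_s}$ and reading off the coefficient of $z^n$ gives
$$\sigma_n=\sum_{\substack{t_1+2t_2+\cdots+nt_n=n\\ 0\le t_s}}\ \prod_{s=1}^{n}\frac{(-1)^{(s+1)t_s}\,p_s^{t_s}}{t_s!\,s^{t_s}}.$$
Using the constraint $\sum_s s t_s=n$ we get $\prod_s(-1)^{(s+1)t_s}=(-1)^{\sum_s(s+1)t_s}=(-1)^{n+\sum_s t_s}=(-1)^n\prod_s(-1)^{t_s}$, so multiplying through by $(-1)^n$ to pass from $\sigma_n$ to $e_n$ cancels the factor $(-1)^n$ and leaves
$$e_n=\sum_{\substack{t_1+2t_2+\cdots+nt_n=n\\ 0\le t_s}}\ \prod_{s=1}^{n}(-1)^{t_s}\,\frac{p_s^{t_s}}{t_s!\,s^{t_s}}.$$

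Finally I would substitute $p_s=A_s\cdot k+B_s\cdot m+C_s(k')$, which is legitimate because every $s$ with $t_s>0$ in a term with $\sum_s s t_s=n$ satisfies $s\le n\le \ell-m$, exactly the range of validity of Theorem~\ref{linearity thm}; this yields the claimed formula for $e_n$. I do not expect a genuine obstacle: once $n$ and $\ell-m$ are fixed the generating-function manipulation is a finite identity in the polynomial ring in $k,m$, so no convergence issue arises, and the only delicate point is the sign bookkeeping in converting between $\sigma_n$ and $e_n$. One could instead invoke the standard recursive form of Newton's identities (with signs matched to the convention $F_{k,m}(t)=t^{\ell-m}+e_1 t^{\ell-m-1}+\cdots$) and induct on $n$, but the generating-function route makes the multinomial shape of the answer immediate.
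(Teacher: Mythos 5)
Your proposal is correct and matches the paper's approach: the paper simply invokes Newton's identities (in their closed multinomial form) applied to the power sums $p_s = A_s k + B_s m + C_s(k')$ furnished by Theorem \ref{linearity thm}, offering no further proof. Your generating-function derivation of that multinomial identity and your sign bookkeeping between $\sigma_n$ and $e_n$ are accurate, and your observation that every $s$ appearing satisfies $s \le n \le \ell - m$ correctly justifies the substitution.
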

This gives us a non-recursive formula for the coefficients of the Faber polynomial.

\subsection{Computing the Values of $A_n$}
\begin{prop}\label{An}
    For all $n$, $A_n = \frac{1}{2\pi} \int_{\mathcal{A}} j^n(\theta) d\theta$.
    \end{prop}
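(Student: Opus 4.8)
The plan is to identify the linear-in-$k$ coefficient $A_n$ of the power sum $\sum_i x_i^n$ with the average of $j^n$ over the arc $\mathcal{A}$ by appealing to the asymptotic equidistribution of the zeros established by Duke and Jenkins. Since by Theorem~\ref{linearity thm} we already know $\sum_{i=1}^{\ell-m} x_i^n = A_n k + B_n m + C_n(k')$ for every $n \le \ell - m$, it suffices to evaluate the leading behaviour of this quantity along a single convenient family. The cleanest choice is $k = 12\ell$ and $m = 0$, i.e. the forms $f_{12\ell,0} = 1 + O(q^{\ell+1})$ studied by Duke and Jenkins, for which all $\ell$ zeros $y_i$ lie on the arc and $x_i = j(y_i)$; here the relation reads $\sum_{i=1}^{\ell} j(y_i)^n = 12 A_n \ell + C_n(0) = 12 A_n \ell$ (with $C_n(0)=0$), so $A_n = \tfrac{1}{12}\lim_{\ell\to\infty}\tfrac1\ell\sum_{i=1}^{\ell} j(y_i)^n$, provided $n$ is held fixed and $\ell \to \infty$ (so that the constraint $n \le \ell$ is eventually satisfied).

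The key step is then to compute this limit via equidistribution. Writing $y_i = e^{i\theta_i}$ with $\theta_i \in [\tfrac{\pi}{2}, \tfrac{2\pi}{3}]$, the Duke--Jenkins result states that the angles $\theta_i$ become equidistributed in $[\tfrac{\pi}{2},\tfrac{2\pi}{3}]$ with respect to Lebesgue measure as $\ell \to \infty$; since $\theta \mapsto j(e^{i\theta})$ is continuous (indeed real-analytic) on this compact interval and $j^n$ is a continuous function of it, Weyl's equidistribution criterion gives
$$\lim_{\ell\to\infty}\frac1\ell\sum_{i=1}^{\ell} j(e^{i\theta_i})^n = \frac{1}{2\pi/3 - \pi/2}\int_{\pi/2}^{2\pi/3} j(e^{i\theta})^n \, d\theta = \frac{6}{\pi}\int_{\pi/2}^{2\pi/3} j(e^{i\theta})^n\, d\theta.$$
Dividing by $12$ yields $A_n = \tfrac{1}{2\pi}\int_{\pi/2}^{2\pi/3} j(e^{i\theta})^n \, d\theta = \tfrac{1}{2\pi}\int_{\mathcal{A}} j^n(\theta)\, d\theta$, which is the claimed formula.

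The main obstacle, and the only genuinely nontrivial input, is the equidistribution statement itself: one must know both that the $\ell$ zeros of $f_{12\ell,0}$ all lie on the arc (so that they can be parametrized by angles $\theta_i$) and that these angles equidistribute for Lebesgue measure. Both facts are exactly the theorem of Duke and Jenkins \cite{first miller basis} quoted in the introduction, so here I would simply invoke it; the remark in the introduction that "the value of $A_n$ is a direct corollary of a result from W. Duke and P. Jenkins" confirms this is the intended route. A minor technical point to address is the interchange of limits — $A_n$ is a fixed integer by Theorem~\ref{linearity thm}, independent of $\ell$, so for each fixed $n$ one is free to let $\ell\to\infty$ and read off $A_n$ from the leading coefficient; no uniformity in $n$ is needed. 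If one wanted a self-contained argument avoiding the analytic equidistribution theorem, one could instead try to prove the integral formula directly from the contour-integral / generating-function description of the $e_i$ and a steepest-descent or residue analysis, but that is strictly harder and unnecessary given the available citation.
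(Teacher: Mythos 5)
Your proposal is correct and follows essentially the same route as the paper: both specialize to $m=0$, invoke the Duke--Jenkins equidistribution of the zeros of $f_{k,0}$ on the arc, and evaluate $A_n$ as $\tfrac{1}{12}$ times the limiting Riemann sum $\tfrac{6}{\pi}\int_{\mathcal{A}} j^n(\theta)\,d\theta$. The only cosmetic difference is that you restrict to $k=12\ell$ while the paper handles general $k'$ by noting the finitely many zeros of $E_{k'}$ do not affect the limit.
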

\begin{proof}
    We will look at $f_{k,0}$ the first form in the Miller basis of weight k.
    \\\\
    We can write
    $$f_{k,0} = E_{k'} \Delta^{\ell} F_{k,0}(j).$$
    $E_{k'}$ has finitely many zeros for all $k' \in \{0,4,6,8,10,14\}$, $\Delta$ has a zero only at $i\infty$ and it is a simple zero, and $j$ has a simple pole at $i\infty$. The zeros of $\Delta^\ell$ and the poles of $F_{k,0}(j)$ cancel out as the Faber polynomial is of degree $\ell$, and the form does not vanish at $i\infty$ (indeed, $f_{k,0} (\tau) = 1 + O(q^{\ell + 1})$ ).
    \\\\
    So the zeros of the form are the zeros of $E_{k'}$ (of bounded number) and $j^{-1}(x_i)$ where $x_i:=x_i(k,0)$ are the zeros of the Faber polynomial of $f_{k,0}$ counted with multiplicity.
    \\\\
    From Duke and Jenkins $\cite{first miller basis}$ we know that the zeros of the modular form $y_i := j^{-1}(x_i)$ are uniformly distributed on the arc as $k \to \infty$.
    $$A_n = \frac{1}{12}\lim_{k \to \infty}\frac{A_n \cdot k + B_n \cdot m + C_n(k')}{\ell} = \frac{1}{12}\lim_{k \to \infty} \frac{1}{\ell} \sum_{i = 1}^{\ell} x_i^n = \frac{1}{12}\lim_{k \to \infty} \frac{1}{\ell} \sum_{i = 1}^{\ell} (j(y_i))^n$$
    as the zeros of $E_{k'}$ do not affect the limit.
    \\\\
    This is a Riemann sum and from the uniform distribution of $y_i$ in the limit we get
    $$\frac{1}{12}\frac{6}{\pi} \int_{\mathcal{A}} j^n(\theta) d\theta = \frac{1}{2\pi} \int_{\mathcal{A}} j^n(\theta) d\theta.$$
\end{proof}

\subsection{Computing the Values of $B_n$}
The are two ways to obtain the value of $B_n$. The first is an algebraic proof which we will present here and the second will be based on the limit distribution shown in Part 6. 
\\\\
Recall that we let $c_n(d)$ the coefficient of $q^d$ in $j^n$ and we will denote $c(d) := c_1(d)$.
\begin{lemma}\label{coefficients of powers of j mod ell}
    $$c_n(0) = \sum_{d_1+...+nd_n = n} \frac{n!}{(n-\sum d_t)!} \prod_{t=1}^n \frac{(c(t-1))^{d_t}}{d_t !}$$
    $$c_{\ell - m -i}(n + m -\ell) \equiv \sum_{d_1+...+(n-i)d_{n-i} = n-i} \left((-1)^{\sum d_t} \prod_{\alpha = 0}^{\sum d_t - 1}(i + m + \alpha)\right) \left( \prod_{t=1}^{n-i} \frac{(c(t-1))^{d_t}}{d_t !}\right) \mod{\ell}$$
\end{lemma}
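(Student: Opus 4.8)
## Proof proposal for Lemma \ref{coefficients of powers of j mod ell}

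\textbf{Overall strategy.} The two formulas are both instances of extracting a specific Taylor/Laurent coefficient from a power of $j$, so the plan is to write $j = q^{-1} g(q)$ where $g(q) = 1 + c(0) q + c(1) q^2 + \dots = \sum_{t \ge 0} c(t-1) q^t$ is the holomorphic part (here I adopt the convention $c(-1) = 1$, matching $j = q^{-1} + 744 + \dots$), and then expand $j^N = q^{-N} g(q)^N$ by the multinomial theorem. The coefficient of a prescribed power of $q$ in $j^N$ is then a coefficient of $g(q)^N$, and $g(q)^N = \bigl(\sum_{t \ge 1} c(t-1) q^t + 1\bigr)^N$ unpacks via the multinomial theorem into a sum over exponent tuples $(d_1, d_2, \dots)$ with weight $\sum_t d_t$ on the "non-$1$" factors, contributing a factor $\binom{N}{d_1, d_2, \dots, N - \sum d_t}$ and the product $\prod_t (c(t-1))^{d_t}/d_t!$ after absorbing the $d_t!$'s. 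I would carry this out first for $N = n$ and the coefficient of $q^0$ in $j^n$, which is exactly the coefficient of $q^n$ in $g(q)^n$; the constraint $d_1 + 2d_2 + \dots + n d_n = n$ is precisely "total $q$-degree $= n$," and the multinomial coefficient $\frac{n!}{(n - \sum d_t)! \, d_1! \cdots d_n!}$ gives the stated first identity after pulling the $d_t!$ into the product.

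\textbf{The second identity.} Here $N = \ell - m - i$ and we want the coefficient of $q^{n + m - \ell}$ in $j^N$, i.e. the coefficient of $q^{N + n + m - \ell} = q^{n - i}$ in $g(q)^N$. By the same multinomial expansion this is
\[
\sum_{d_1 + 2 d_2 + \dots + (n-i) d_{n-i} = n - i} \binom{N}{d_1, \dots, d_{n-i}, N - \textstyle\sum d_t} \prod_{t=1}^{n-i} \frac{(c(t-1))^{d_t}}{d_t!} \cdot \Big(\textstyle\prod_t d_t!\Big)^{?}
\]
— the point is that the only $N$-dependence sits in the multinomial coefficient, and $\binom{N}{\dots} = \frac{1}{d_1! \cdots d_{n-i}!} \cdot N (N-1) \cdots (N - \sum d_t + 1)$. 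Now reduce modulo $\ell$: since $N = \ell - m - i \equiv -(m+i) \pmod \ell$, the falling factorial $N(N-1)\cdots(N - \sum d_t + 1)$ becomes $(-(m+i))(-(m+i)-1)\cdots(-(m+i) - \sum d_t + 1) = (-1)^{\sum d_t}\prod_{\alpha=0}^{\sum d_t - 1}(m + i + \alpha) \pmod \ell$. Substituting and folding the leftover $1/(d_1! \cdots d_{n-i}!)$ into the product gives exactly the claimed congruence. I would present this as: (i) state the multinomial expansion of $g(q)^N$ with the degree-$(n-i)$ truncation (higher $d_t$ can't contribute to $q^{n-i}$); (ii) isolate the falling factorial; (iii) reduce it mod $\ell$.

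\textbf{Main obstacle and bookkeeping points.} The conceptually trivial but error-prone part is the index bookkeeping: keeping the convention $c(-1) = 1$ straight, making sure the shift $j^N = q^{-N} g^N$ turns "coefficient of $q^{n+m-\ell}$ in $j^N$" into "coefficient of $q^{n-i}$ in $g^N$" correctly (using $N = \ell - m - i$), and checking that the truncation of the sum at $d_{n-i}$ is legitimate — any factor $c(t-1) q^t$ with $t > n - i$ overshoots the target degree, so such $d_t$ must vanish, which is why only $d_1, \dots, d_{n-i}$ appear. One should also note that the entries $c(t-1)$ for $t \le n - i$ are themselves integers (coefficients of $j$), so the expression makes sense mod $\ell$ only after the $1/d_t!$ factors are absorbed into integer multinomial coefficients; I would remark that the final displayed quantity is manifestly an integer because it equals an honest coefficient of $g(q)^N \in \Z[[q]]$, and the mod-$\ell$ statement is about that integer. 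The genuinely substantive input — that $j$ has $q$-expansion $q^{-1} + \sum_{t \ge 0} c(t) q^t$ — is already available, so no hard analysis is needed; the lemma is a formal power-series computation whose only real content is the clean reduction of the falling factorial modulo $\ell$, which is what makes these coefficients usable in the subsequent computation of $B_n$.
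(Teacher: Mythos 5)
Your proposal is correct and follows essentially the same route as the paper: the paper also works with $qj(q)=\sum_{t\ge 0}c(t-1)q^t$ (your $g(q)$), extracts the relevant coefficient of its $N$-th power via the multinomial theorem, identifies $c_n(0)$ and $c_{\ell-m-i}(n+m-\ell)$ with the coefficients of $q^n$ and $q^{n-i}$ respectively, and reduces the falling factorial $\frac{(\ell-m-i)!}{(\ell-m-i-\sum d_t)!}$ modulo $\ell$ exactly as you do. No substantive differences.
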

\begin{proof}
    Let $c_N'(n)$ denote the coefficient of $q^n$ in $(qj(q))^{N}$. 
    \\\\
    We write the $q$ expansion
    $$qj(q) = 1 + 744q + 196884 q^2 + ... = \sum_{t = 0}^{\infty} c(t-1)q^t.$$
    Then
    $$c_{N}'(n) = \sum_{d_1+...+nd_n = n} \frac{N!}{(N-\sum_{1\le i}d_i)!} \prod_{t=1}^n \frac{(c(t-1))^{d_t}}{d_t !}.$$
    By definition, $c_n(0) = c_n'(n)$ and this is
    $$c_n(0) = c_{n}'(n) = \sum_{d_1+...+nd_n = n} \frac{n!}{(n-\sum_{1\le i}d_i)!} \prod_{t=1}^n \frac{(c(t-1))^{d_t}}{d_t !}.$$
    In the same way, $c_{\ell - m - i}(n+m+ -\ell) = c_{\ell - m - i}'(n+m+ -\ell + \ell - m - i) = c_{\ell - m -i}'(n-i)$ and this is
    $$c_{\ell - m -i}'(n-i) = \sum_{d_1+...+(n-i)d_{n-i} = n-i} \frac{(\ell - m -i)!}{(\ell - m -i -\sum d_t)!} \prod_{t=1}^{n-i} \frac{(c(t-1))^{d_t}}{d_t !}$$
    $$\frac{(\ell - m -i)!}{(\ell - m -i -\sum d_t)!} = (\ell - m -i) \cdot (\ell-m-i-1) \cdot ... \cdot (\ell - m- i -\sum d_t + 1) \equiv (-1)^{\sum d_t} \prod_{\alpha = 0}^{\sum d_t - 1}(i + m + \alpha) \mod{\ell}.$$
    Putting it all together, we get
    $$c_{\ell - m -i}'(n-i) \equiv \sum_{d_1+...+(n-i)d_{n-i} = n-i} \left((-1)^{\sum d_t} \prod_{\alpha = 0}^{\sum d_t - 1}(i + m + \alpha)\right) \left( \prod_{t=1}^{n-i} \frac{(c(t-1))^{d_t}}{d_t !}\right) \mod{\ell}.$$
\end{proof}

\begin{prop}
    For all $n$ we have
    $$B_n = -c_n(0)$$    
    where $c_n(0)$ is the coefficient of $q^0$ in $j^n$.
\end{prop}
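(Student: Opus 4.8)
The plan is to extract $B_n$ from the coefficient of $m$ in the power sum $\sum_i x_i^n$, computed via Newton's identities from the Faber polynomial coefficients, working modulo $\ell$ so that only the $m$-dependence survives in a controlled way. The starting point is the algorithm from Section~2.1: for $f_{k,m} = q^m + O(q^{\ell+1})$ with $k = 12\ell + k'$, the Faber coefficient $e_s$ is the coefficient of $q^{m+s-\ell}$ in $f_{k,m}\Delta^{-\ell}E_{k'}^{-1} - \sum_{n<s} e_n j^{\ell-m-n}$. Since we only want the linear-in-$m$ behaviour and Theorem~\ref{linearity thm} already tells us the power sum is \emph{exactly} linear in $k, m$, it suffices to compute $\sum_i x_i^n \bmod \ell$ and read off the coefficient of $m$ there — indeed $A_n k + B_n m + C_n(k') \equiv B_n m + C_n(k') \pmod{\ell}$ since $k = 12\ell + k' \equiv k' \pmod\ell$. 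The reason to reduce mod $\ell$ is that $\Delta^{-\ell}$ and the high powers $j^{\ell - m - n}$ have binomial-type coefficients $\binom{\ell - m - i}{\cdot}$ whose reductions mod $\ell$ collapse to the clean products $(-1)^{\sum d_t}\prod_\alpha (i + m + \alpha)$ appearing in Lemma~\ref{coefficients of powers of j mod ell}.

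Concretely, I would first observe that working mod $\ell$ kills the $\Delta^{-\ell}$ and $E_{k'}^{-1}$ corrections in a useful way: $\Delta^{-\ell} = q^{-\ell}\prod(1-q^n)^{-24\ell}$ and the coefficient $\tau_{-\ell}(n-\ell)$ of $q^{n-\ell}$ is, mod $\ell$, congruent to the trivial value coming from $(1-q^n)^{-24\ell} \equiv 1 \pmod \ell$ in the relevant low-order terms — so effectively $f_{k,m}\Delta^{-\ell}E_{k'}^{-1} \equiv q^{m-\ell} \pmod{\ell}$ up to order $q^{m-\ell + (\ell - m)}$. Then the defining relation for the Faber polynomial becomes, mod $\ell$, simply $q^{m-\ell} \equiv \sum_{n=0}^{\ell-m} e_n j^{\ell-m-n} \pmod\ell$ in the first $\ell - m + 1$ coefficients, and matching the coefficient of $q^{n + m - \ell}$ gives a triangular linear system for the $e_n \bmod \ell$ whose entries are exactly the $c_{\ell-m-i}(n+m-\ell)$ computed in Lemma~\ref{coefficients of powers of j mod ell}. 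Solving this system (or equivalently recognizing it as the statement that $\sum e_n j^{\ell-m-n}$ agrees with $j^{\ell - m}$ composed appropriately) recovers $e_n \bmod \ell$ as a polynomial in $m$, and feeding these into Newton's identities yields $\sum_i x_i^n \bmod \ell$.

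The key computational identity to establish is that after this reduction, $\sum_{i} x_i^n \equiv -c_n(0)\, m + C_n(k') \pmod{\ell}$, i.e. that the polynomial $P_{n,k'}(k,m) \bmod \ell$ has $m$-coefficient equal to $-c_n(0)$. The cleanest route is probably a generating-function / residue argument: the power sum $\sum x_i^n$ is $[t^{-1}]\, t^n \frac{F_{k,m}'(t)}{F_{k,m}(t)}$, and mod $\ell$ the polynomial $F_{k,m}(t)$ should be recognizable — given that $f_{k,m}/(\Delta^\ell E_{k'}) = F_{k,m}(j)$ and mod $\ell$ the form $f_{k,m}$ behaves like $\Delta^m \cdot (\text{something of weight } k - 12m \equiv k' - 12m)$, one expects $F_{k,m}(j) \equiv j^{\ell - m} \cdot (\text{low-degree correction}) \pmod\ell$, making the $m$-dependence of the roots transparent. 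I would try to show $F_{k,m}(t) \equiv t^{\ell - m} - m\,c_2(0)\,\text{(...)} + \cdots$, or more efficiently bypass the explicit polynomial and directly compute the power sum's $m$-linear part by differentiating the relation $q^{m-\ell} \equiv \sum e_n j^{\ell-m-n}$ with respect to a formal parameter tracking $m$.

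The main obstacle I expect is controlling the $\Delta^{-\ell}$ and $E_{k'}^{-1}$ factors carefully enough: while $(1-q^n)^{-24\ell} \equiv 1 \pmod \ell$ formally, one must check that the truncation to the first $\ell - m + 1$ coefficients doesn't introduce terms where the binomial coefficients $\binom{-24\ell}{r}$ fail to vanish mod $\ell$ — this needs $r < \ell$, which holds in our range but requires a clean bookkeeping argument, and similarly the inverse Eisenstein series $E_{k'}^{-1}$ contributes $\gamma_{k'}$-denominators that must be shown to be $\ell$-integral and $\equiv 1 \pmod \ell$ in the relevant range (this is where $k'$ being bounded and $\ell$ large matters). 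Once that reduction is justified, the combinatorics of Lemma~\ref{coefficients of powers of j mod ell} plus Newton's identities is routine bookkeeping, and the identification of the $m$-coefficient with $-c_n(0)$ should fall out of the observation that the leading behaviour of the system is governed entirely by $j = q^{-1} + c(0) + c(1)q + \cdots$ and its powers near the $q^0$ coefficient.
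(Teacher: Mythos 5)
Your setup is the same as the paper's: fix the recursion $e_n = \tau_{-\ell}(n-\ell) - \sum_{i<n} e_i c_{\ell-m-i}(n+m-\ell)$, reduce mod $\ell$ so that $\Delta^{-\ell}\equiv q^{-\ell}+O(1)$ kills the $\tau_{-\ell}$ terms, feed in Lemma \ref{coefficients of powers of j mod ell}, and compare against the expression for $e_n$ coming from Newton's identities applied to the power sums $A_nk+B_nm+C_n(k')$. Two points in your reduction need tightening. First, your congruence $A_nk+B_nm+C_n(k')\equiv B_nm+C_n(k')\pmod{\ell}$ is false in general, since $k\equiv k'\pmod{\ell}$ leaves an $A_nk'$ term; you must fix $k'=0$ (as the paper does), which simultaneously makes $k\equiv 0$, forces $C_n(0)=0$, and eliminates the $E_{k'}^{-1}$ factor whose $\ell$-integrality you list as an obstacle. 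Second, ``reading off the coefficient of $m$'' from a congruence mod $\ell$ between polynomials in $m$ is not automatic; the paper's device is to reduce additionally mod $m^2$, using that $m\mid e_i$ for $i>0$ (visible from the Newton's-identities expression, where every monomial carries a factor $mB_s$), so that the Newton side collapses to the single term $-mB_n/n$ and on the other side one may replace $c_{\ell-m-i}(n+m-\ell)$ by its residue mod $m$ for $i>0$ and mod $m^2$ for $i=0$. Your alternative of ``differentiating with respect to a formal parameter tracking $m$'' could be made to work but is not carried out.

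The more serious issue is that the decisive step is asserted rather than proved. After the reductions above one is left with an identity of the shape $mB_n/n \equiv m\sum_{i=0}^{n-1}(\cdots)\pmod{\ell,m^2}$, where the right-hand side is a triple sum over compositions (from $c_i(0)$ expanded via the first formula of Lemma \ref{coefficients of powers of j mod ell}, multiplied against the second formula). Identifying this with $-c_n(0)/n$ is a genuine combinatorial cancellation: one reindexes by $b_t=d_t+g_t$ and observes that the inner alternating sum over $g_1$, namely $\sum_{0\le g_1\le b_1}(-1)^{g_1}/(g_1!(b_1-g_1)!)=(1-1)^{b_1}/b_1!$, vanishes for $b_1>0$, leaving exactly the single surviving term $-c_n(0)$. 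This cancellation is where essentially all of the work in the paper's proof lies, and your statement that the identification ``should fall out of the observation that the leading behaviour is governed by $j=q^{-1}+c(0)+\cdots$'' does not supply it. Likewise, your suggested shortcut via $F_{k,m}(j)\equiv j^{\ell-m}\cdot(\text{low-degree correction})\pmod{\ell}$, resting on the heuristic that $f_{k,m}$ ``behaves like $\Delta^m$ times something'' mod $\ell$, is not justified and is not obviously true in a usable form. As written, the proposal reconstructs the scaffolding of the argument but leaves the central identity unestablished.
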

\begin{proof}
    Take $k' = 0$ and $m > 0$. For a general $\ell, m$, we will compute the coefficients of the Faber polynomial modulo $\ell, m^2$.
    \\\\
    As $e_n$ is the $n$th coefficient in the Faber polynomial, we know it is the coefficient of $q^{n+m-l}$ in
    $$q^m \Delta^{-\ell} - \sum_{i = 0}^{n-1} e_i j^{\ell - m -i}.$$
    So if $\tau_{-\ell}(n-\ell)$ is the coefficient of $q^{n-\ell}$ in $\Delta^{-\ell}$ and $c_{\ell-m-i}(n+m-l)$ is the coefficient of $q^{n+m-l}$ in $j^{\ell-m-i}$, the formula is
    $$e_n = \tau_{-\ell}(n-\ell) - \sum_{i=0}^{n-1} e_i c_{\ell-m-i}(n+m-\ell),$$
    as we multiply $\Delta^{\ell}$ by $q^m$.
    \\\\
    We are working mod $\ell$, and so as
    $$\Delta^{-1} = \sum_{i = 1}^{\infty}\tau_{-1}(i) q^i$$
    when we look at $\Delta^{-\ell}$ mod $\ell$, we have
    $$\Delta^{-\ell} \equiv \sum_{i = 1}^{\infty}(\tau_{-1})^\ell(i) q^{i\cdot \ell} \equiv q^{-\ell} + O(1) \mod{\ell}.$$
    We only care about the negative powers of $q$ in $\Delta^{-\ell}$.
    This gives us the value of $e_0 = 1$.
    \\\\
    So for $1 < n < \ell - m$,
    $$e_n \equiv - \sum_{i=0}^{n-1} e_i c_{\ell-m-i}(n+m-\ell)\mod{\ell}
    $$
    with the initial condition of $e_0 = 1$.
    \\\\
    Another way to get the coefficients of the Faber polynomials are using Newton's identities on the sums of powers of the roots. In our case, $\sum_{i=1}^{\ell - m} x_i^{n} \equiv B_n \cdot m \mod{\ell}$, and so we get that
    $$e_n \equiv \sum_{d_1 + ... + nd_n = n} \prod_{s = 1}^{n} \frac{(- m \cdot B_s)^{d_s}}{d_s ! s^{d_s}} \mod{\ell}.$$
    \\\\
    Combining the two equation together,
    $$\sum_{d_1 + ... + nd_n = n} \prod_{s = 1}^{n} \frac{(- m \cdot B_s)^{d_s}}{d_s ! s^{d_s}} \equiv - \sum_{i=0}^{n-1} e_i c_{\ell-m-i}(n+m-\ell)\mod{\ell}.$$
    Let's look at this mod $m^2$. The only choice of $d_1,...,d_n$ where we get something that is not zero mod $m^2$ is for $d_1 = ... d_{n-1} = 0$ and $d_n = 1$. Our equivalence is now
    $$- m \frac{B_n}{n} \equiv - \sum_{i=0}^{n-1} e_i c_{\ell-m-i}(n+m-\ell)\mod{\ell, m^2}.$$
    As $m \vert e_i$ for $0 < i$, and we only care about $e_n \mod{m^2}$, we can replace $c_{\ell-m-i}(n+m-\ell)$ with it's residue mod $m$ for $0<i$ and it's residue mod $m^2$ for $i=0$.
    \\\\
    From Lemma \ref{coefficients of powers of j mod ell} , for $0<i$,
    $$c_{\ell - m -i}(n + m -\ell) \equiv \sum_{d_1+...+(n-i)d_{n-i} = n-i} \left((-1)^{\sum d_t} \prod_{\alpha = 0}^{\sum d_t - 1}(i + m + \alpha)\right) \left( \prod_{t=1}^{n-i} \frac{(c(t-1))^{d_t}}{d_t !}\right) \equiv $$
    $$ \sum_{d_1+...+(n-i)d_{n-i} = n-i} \left((-1)^{\sum d_t} \prod_{\alpha = 0}^{\sum d_t - 1}(i + \alpha)\right) \left( \prod_{t=1}^{n-i} \frac{(c(t-1))^{d_t}}{d_t !}\right) \equiv$$
    $$ \sum_{d_1+...+(n-i)d_{n-i} = n-i} \left((-1)^{\sum d_t} \frac{(i + \sum_{d_t} - 1)!}{(i-1)!} \right) \left( \prod_{t=1}^{n-i} \frac{(c(t-1))^{d_t}}{d_t !}\right) \mod{\ell,m}$$

    and for $i = 0$ this is
    $$c_{\ell - m}(n + m -\ell) \equiv \sum_{d_1+...+(n)d_{n} = n} \left((-1)^{\sum d_t} \prod_{\alpha = 0}^{\sum d_t - 1}( m + \alpha)\right) \left( \prod_{t=1}^{n-i} \frac{(c(t-1))^{d_t}}{d_t !}\right) \equiv $$
    $$\sum_{d_1+...+(n)d_{n} = n} \left((-1)^{\sum d_t} m \cdot \prod_{\alpha = 1}^{\sum d_t - 1}\alpha\right) \left( \prod_{t=1}^{n-i} \frac{(c(t-1))^{d_t}}{d_t !}\right) \equiv $$
    $$ m \cdot \sum_{d_1+...+(n)d_{n} = n} \left((-1)^{\sum d_t} (\sum d_t - 1)! \right) \left( \prod_{t=1}^{n-i} \frac{(c(t-1))^{d_t}}{d_t !}\right) \mod{\ell,m^2}.$$
    Over all, we get that
    $$m \frac{B_n}{n} \equiv m \cdot \sum_{i=0}^{n-1} (\sum_{d_1+...+(n-i)d_{n-i} = n-i} c_i(0) \left((-1)^{\sum d_t} \frac{(i + \sum_t{d_t} - 1)!}{(i)!} \right) \left( \prod_{t=1}^{n-i} \frac{(c(t-1))^{d_t}}{d_t !}\right)) \mod{\ell, m^2}$$
    expanding $c_i(0)$ as a sum gives us
    $$m \cdot \sum_{i=0}^{n-1} \sum_{d_1+...+(n-i)d_{n-i} = n-i} \sum_{g_1+...+ig_i = i} \frac{i!}{(i-\sum g_s)!} \prod_{s=1}^n \frac{(c(t-1))^{g_s}}{g_s !}) \left((-1)^{\sum d_t} \frac{(i + \sum_{d_t} - 1)!}{(i)!} \right) \left( \prod_{t=1}^{n-i} \frac{(c(t-1))^{d_t}}{d_t !}\right) =$$    

    $$m \cdot \sum_{i=0}^{n-1}  \sum_{d_1+...+(n-i)d_{n-i} = n-i} \sum_{g_1+...+ig_i = i} (-1)^{\sum d_t} \frac{(i + \sum{d_t} - 1)!}{(i-\sum g_s)!}\left( \prod_{1\le t \le n} \frac{(c(t-1))^{g_t + d_t}}{g_t ! d_t !} \right) \mod{\ell, m^2}$$
    where $d_t = 0$ for $t > n-i$ and $D_s = 0$ for $s > i$.
    \\\\
    Over all, as $B_n$ is a constant and does not depend on $\ell, m$,
    $$ B_n = n \sum_{i=0}^{n-1}  \sum_{d_1+...+(n-i)d_{n-i} = n-i} \sum_{g_1+...+ig_i = i} (-1)^{\sum d_t} \frac{(i + \sum{d_t} - 1)!}{(i-\sum g_s)!}\left( \prod_{1\le t \le n} \frac{(c(t-1))^{g_t + d_t}}{g_t ! d_t !} \right).$$
    If we take $i = n$ in the right sum, we get $c_n(0)$, so it means
    $$B_n = -c_n(0) + n \sum_{i=0}^{n}  \sum_{d_1+...+(n-i)d_{n-i} = n-i} \sum_{g_1+...+ig_i = i} (-1)^{\sum d_t} \frac{(i + \sum{d_t} - 1)!}{(i-\sum g_t)!}\left( \prod_{1\le t \le n} \frac{(c(t-1))^{g_t + d_t}}{g_t ! d_t !} \right).$$

    The sum can be expressed as
    $$\sum_{ \sum_{t=1}^{n} t(d_t + g_t) = n} (-1)^{\sum d_t} \frac{(\sum_{t} t g_t + \sum{d_t} - 1)!}{(\sum (t-1) g_t)!}\left( \prod_{1\le t \le n} \frac{(c(t-1))^{g_t + d_t}}{g_t ! d_t !} \right) = $$
    $$\sum_{ \sum_{t=1}^{n} t b_t = n} \sum_{d_t + g_t = b_t}(-1)^{\sum d_t} \frac{(\sum_{t} t g_t + \sum{d_t} - 1)!}{(\sum (t-1) g_t)!}\left( \prod_{1\le t \le n} \frac{(c(t-1))^{g_t + d_t}}{g_t ! d_t !} \right) = $$
    $$\sum_{ \sum_{t=1}^{n} t b_t = n} \sum_{0 \le g_t \le b_t}(-1)^{\sum b_t - g_t} \frac{(\sum_{t} (t-1) g_t + \sum{b_t} - 1)!}{(\sum (t-1) g_t)!}\left( \prod_{1\le t \le n} \frac{(c(t-1))^{b_t}}{g_t ! (b_t - g_t)!} \right) = $$
    \begin{multline*}    
    \sum_{ \sum_{t=1}^{n} t b_t = n} \sum_{\substack{0 \le g_t \le b_t \\ 1 < t}}(-1)^{b_1 + \sum_{1< t} b_t - g_t } \frac{(\sum_{t} (t-1) g_t + \sum{b_t} - 1)!}{(\sum (t-1) g_t)!}  
    \\
    \left( \prod_{2\le t \le n} \frac{(c(t-1))^{b_t}}{g_t ! (b_t - g_t)!} \right) c(0)^{b_1} \sum_{0 \le g_1 \le b_1} \prod_{1 \le t \le n} \frac{(-1)^{g_0}}{g_0 ! (b_0 - g_0)!}.
    \end{multline*}
    We can now see that the sum vanishes as the inner sum cancels out, which leaves us with $B_n = -c_n(0)$.
\end{proof}
This finishes the proof of Theorem \ref{linearity thm}.
\section{Upper Bound}

We will now prove an upper bound for the value of $m$ for which all of the zeros of $f_{k,m}$ are on the arc.
\begin{lemma}
    For all n, 
    $$B_n = -\int_{\frac{\pi}{3}}^{\frac{2\pi}{3}}\sin(\theta) j^n(e^{i\theta}) d\theta.$$
\end{lemma}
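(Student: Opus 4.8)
The plan is to reduce the claim to the already-established identity $B_n = -c_n(0)$ from Theorem \ref{linearity thm} (equivalently the preceding Proposition), where $c_n(0)$ is the constant term of the $q$-expansion of $j^n$, and then to extract $c_n(0)$ as a contour integral of the holomorphic function $j^n$ around the boundary of a truncated fundamental domain. Concretely, fix $T>1$ and let $\mathcal{F}_T = \{\tau\in\mathbb{H} : |\Re\tau|\le \tfrac12,\ |\tau|\ge 1,\ \Im\tau\le T\}$, a curvilinear pentagon with vertices $e^{2\pi i/3},\ e^{i\pi/3},\ \tfrac12+iT,\ -\tfrac12+iT$; note $\overline{\mathcal{F}_T}$ is a compact subset of $\mathbb{H}$ (we have $\Im\tau\ge \tfrac{\sqrt3}{2}$ on it), so since $j$, hence $j^n$, is holomorphic on all of $\mathbb{H}$, Cauchy's theorem gives $\oint_{\partial\mathcal{F}_T} j^n(\tau)\,d\tau = 0$.

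Next I would evaluate the four arcs of $\partial\mathcal{F}_T$ separately. The two vertical segments $\Re\tau = \pm\tfrac12$ cancel one another: the substitution $\tau\mapsto\tau+1$ identifies them, $j(\tau+1)=j(\tau)$, and their induced orientations are opposite. On the horizontal segment $\Im\tau = T$, writing $\tau=\sigma+iT$ and using the expansion $j^n(\tau)=\sum_{d\ge -n} c_n(d)e^{2\pi i d\tau}$, which converges absolutely here since $|q|=e^{-2\pi T}<1$, the orthogonality relation $\int_{-1/2}^{1/2} e^{2\pi i d\sigma}\,d\sigma = \delta_{d,0}$ shows (after justifying the term-by-term integration) that this segment contributes exactly $\pm c_n(0)$, the sign fixed by orientation and independent of $T$. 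On the arc, parametrizing $\tau=e^{i\theta}$ with $\theta\in[\tfrac{\pi}{3},\tfrac{2\pi}{3}]$ and $d\tau = ie^{i\theta}\,d\theta = (i\cos\theta-\sin\theta)\,d\theta$, the contribution is $\pm\int_{\pi/3}^{2\pi/3}(i\cos\theta-\sin\theta)\,j^n(e^{i\theta})\,d\theta$.

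The one step with actual content is that the $\cos\theta$ piece drops out. The matrix $S=\begin{psmallmatrix} 0 & -1 \\ 1 & 0\end{psmallmatrix}\in SL_2(\Z)$ sends $e^{i\theta}$ to $-e^{-i\theta}=e^{i(\pi-\theta)}$, and $j(-1/\tau)=j(\tau)$, so $j^n(e^{i(\pi-\theta)})=j^n(e^{i\theta})$; thus $j^n(e^{i\theta})$ is invariant under the reflection $\theta\mapsto\pi-\theta$, which maps $[\tfrac{\pi}{3},\tfrac{2\pi}{3}]$ onto itself, whereas $\cos\theta\mapsto-\cos\theta$ under the same reflection. Hence $\int_{\pi/3}^{2\pi/3}\cos\theta\, j^n(e^{i\theta})\,d\theta = 0$. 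Substituting the three surviving contributions into $\oint_{\partial\mathcal{F}_T} j^n\,d\tau = 0$ and bookkeeping the orientation signs (a single sanity check, e.g. $n=0$ where $c_0(0)=1=\int_{\pi/3}^{2\pi/3}\sin\theta\,d\theta$, pins them down) yields $c_n(0)=\int_{\pi/3}^{2\pi/3}\sin\theta\, j^n(e^{i\theta})\,d\theta$, and therefore $B_n=-c_n(0)=-\int_{\pi/3}^{2\pi/3}\sin\theta\, j^n(e^{i\theta})\,d\theta$.

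I do not expect a genuine obstacle here: the delicate points are purely the orientation conventions for $\partial\mathcal{F}_T$ and the routine justification for integrating the Fourier series of $j^n$ term by term along the horizontal segment. The conceptual heart is simply the observation that the real part of the arc integral vanishes by $S$-invariance, so that the vanishing boundary integral of the holomorphic function $j^n$ identifies its constant Fourier coefficient with a single sine-weighted integral over the full lower arc of the fundamental domain.
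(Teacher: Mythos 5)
Your proposal is correct and follows essentially the same route as the paper: both reduce to $B_n=-c_n(0)$, apply Cauchy's theorem to $j^n$ on the truncated fundamental domain so that the periodic vertical sides cancel, identify the top horizontal segment with $c_n(0)$ via the $q$-expansion, and kill the $\cos\theta$ part of the arc integral using the $S$-symmetry $j(e^{i(\pi-\theta)})=j(e^{i\theta})$. Your explicit $n=0$ sanity check for the orientation signs and the term-by-term integration remark are just slightly more careful versions of steps the paper leaves implicit.
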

\begin{proof}
    As $c_n(0)$ is the coefficient of $q^0$ in $j^n$, we can write
    $$-B_n = c_n(0) = \Res_{q = 0} (q^{-1} j^n(q)) = \frac{1}{2\pi i}\int_{\abs{t} = r}(q^{-1} j^n(q)) dq$$
    for some $0 <r < 1$.
    \\\\
    After a change of variables $q = e^{2 \pi i \tau}$ with $A>1$, we have
    $$-B_n = \int_{-\frac{1}{2} + A i}^{\frac{1}{2} + A i} j^n(\tau) d\tau.$$ 
    Let $\gamma$ denote the following contour where $\gamma_t : [t-1, t] \to \C$ for $t = 1,...,4$,
    \begin{figure}[ht]
    \begin{center}
    \includegraphics[height=60mm]{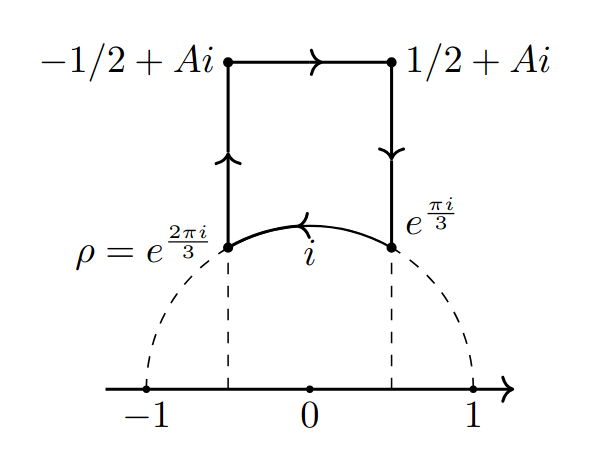}
    \caption{The contour of integration}
    \label{fig:diagram}
    \end{center}
    \end{figure}
    $$\gamma_1 (\tau) = e^{i\frac{\pi}{3}(1 + \tau)} \quad \quad \gamma_2 (\tau) = -\frac{1}{2} + \frac{\sqrt{3}}{2} i + (\tau-1) (A - \frac{\sqrt{3}}{2})i$$
    $$\gamma_3 (\tau) = -\frac{1}{2} + (\tau-2) + A i \quad \quad \text{and} \quad \quad \gamma_4 (\tau) = \frac{1}{2} + Ai + (\tau-3) (\frac{\sqrt{3}}{2} - A)i.$$

    As $j$ is a modular function, we have
    $$\int_{\gamma_2} j^n = -\int_{\gamma_4} j^n,$$
    which means that as $-B_n = \int_{\gamma_1} j^n(\tau)d\tau$,
    $$B_n = \int_{\gamma_3} j^n(\tau) d\tau.$$
    We will do a change of variables of $\tau = e^{i \theta}$ so $d\tau = i e^{i\theta} d\theta$, so 
    $$B_n = \int_{\frac{\pi}{3}}^{\frac{2\pi}{3}} ie^{i\theta} j^n(e^{i\theta}) d\theta = \int_{\frac{\pi}{3}}^{\frac{2\pi}{3}} i(\cos(\theta) + i\sin(\theta)) j^n(e^{i\theta}) d\theta.$$
    As for $\alpha \le \frac{\pi}{6}$, $\cos(\frac{\pi}{2} + \alpha) = -\cos(\frac{\pi}{2} - \alpha)$ and $j(\frac{\pi}{2} + \alpha) = j(\frac{\pi}{2} - \alpha)$, the imaginary part of the integral vanishes and we are left with
    $$B_n = -\int_{\frac{\pi}{3}}^{\frac{2\pi}{3}}\sin(\theta) j^n(e^{i\theta}) d\theta.$$
\end{proof}
\begin{lemma}\label{pi approx} 
    $$\lim_{n \to \infty}\frac{A_n}{-B_n} = \frac{1}{4\pi}.$$
\end{lemma}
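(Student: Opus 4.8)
The plan is to use the two contour-integral formulas already established: $A_n = \frac{1}{2\pi}\int_{\mathcal{A}} j^n(\theta)\,d\theta$ and $-B_n = \int_{\pi/3}^{2\pi/3}\sin(\theta)\,j^n(e^{i\theta})\,d\theta$, where $\mathcal{A}$ is parametrized by $\theta \in [\pi/2, 2\pi/3]$ and $j$ restricted to the arc is real-valued with $j(e^{i\pi/2}) = 1728$ being its maximum on $\mathcal{A}$. The key observation is that as $n \to \infty$, both integrals are dominated by the behavior of $j^n$ near the point where $j$ attains its maximum on the relevant interval, namely $\theta = \pi/2$, so this is a Laplace-type asymptotics problem and the ratio $A_n/(-B_n)$ is governed purely by the local behavior of the two integrands at that single point.

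First I would set up the comparison carefully. For $A_n$ the integration variable runs over $[\pi/2, 2\pi/3]$ and for $-B_n$ over $[\pi/3, 2\pi/3]$; but by the symmetry $j(e^{i(\pi/2+\alpha)}) = j(e^{i(\pi/2-\alpha)})$ used in the previous lemma, the contribution to $-B_n$ from $[\pi/3,\pi/2]$ equals (after folding) $\int_{\pi/2}^{2\pi/3}\sin(\theta)\,j^n(e^{i\theta})\,d\theta$ up to the $\sin$ weight being reflected, so in fact $-B_n$ can be rewritten as an integral over $[\pi/2,2\pi/3]$ against a weight that is $2\sin\theta$ near $\pi/2$ (the two halves add) — I should check this folding precisely, since $\sin(\pi/2+\alpha)=\sin(\pi/2-\alpha)=\cos\alpha$, so indeed the two halves reinforce and $-B_n = \int_{\pi/2}^{2\pi/3} w(\theta)\, j^n(e^{i\theta})\,d\theta$ with $w(\theta) \to 2\sin(\pi/2) = 2$ as $\theta \to \pi/2^+$, while the $A_n$ integrand has weight $\tfrac{1}{2\pi}$. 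Then both integrals are Laplace integrals on $[\pi/2,2\pi/3]$ with the same phase $j$ maximized at the left endpoint $\pi/2$, so standard Watson/Laplace endpoint asymptotics give $A_n \sim \tfrac{1}{2\pi}\cdot\frac{j(i)^n}{n\,|\,(\log j)'(\pi/2)\,|}$ and $-B_n \sim 2\cdot\frac{j(i)^n}{n\,|\,(\log j)'(\pi/2)\,|}$ provided $(\log j)'$ does not vanish at $\pi/2$; the unknown factors (the value $j(i)=1728$, the power $n$, and the derivative of $j$ at $\pi/2$) are identical in numerator and denominator and cancel, leaving $A_n/(-B_n) \to \tfrac{1}{2\pi}\big/\,2 = \tfrac{1}{4\pi}$.

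The main obstacle is justifying that $\theta = \pi/2$ is genuinely the unique maximum of $j$ on the arc and that $j'$ behaves nondegenerately there — i.e. that the endpoint Laplace asymptotics apply rather than interior-critical-point asymptotics. On the arc $j$ maps $[\pi/2,2\pi/3] \to [0,1728]$ bijectively and monotonically (this is part of the standard dictionary recalled in the introduction: $j(\mathcal{A})=[0,1728]$ with $j(i)=1728$, $j(e^{2\pi i/3})=0$), so $j$ is strictly decreasing in $\theta$ on this range and attains its max only at $\theta=\pi/2$, with $j'(\pi/2)\neq 0$ — one can see $j'(\pi/2)\neq 0$ because the bijection is smooth and monotone, or directly since $j = E_4^3/\Delta$ and $E_4$ does not vanish at $i$. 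Granting this, the endpoint Laplace lemma applies verbatim to both integrals and the shared leading factor cancels. A clean alternative that sidesteps even writing the subleading constant: substitute $u = j(e^{i\theta})$ in both integrals, turning them into $\int_0^{1728} u^n\,g_A(u)\,du$ and $\int_0^{1728} u^n\, g_B(u)\,du$ where $g_A,g_B$ are the Jacobian-weighted densities, continuous and nonzero at $u=1728$, and then $\frac{\int_0^{1728}u^n g_A}{\int_0^{1728}u^n g_B} \to \frac{g_A(1728)}{g_B(1728)}$ by dominated-convergence after rescaling $u = 1728(1-s/n)$; computing $g_A(1728)/g_B(1728)$ reduces to comparing the two original weights $\tfrac{1}{2\pi}$ and $2$ at $\theta=\pi/2$ (the Jacobians are the same), giving $\tfrac{1}{4\pi}$.
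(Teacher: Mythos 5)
Your overall strategy is the right one and is essentially the paper's: both integrals concentrate at $\theta=\pi/2$ as $n\to\infty$ because $j$ attains its maximum $1728$ on the arc only there, and after folding $[\pi/3,\pi/2]$ onto $[\pi/2,2\pi/3]$ the ratio is governed by the ratio of the weights at $\theta=\pi/2$, namely $\frac{1/(2\pi)}{2\sin(\pi/2)}=\frac{1}{4\pi}$. The folding step and the weight computation are correct.

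However, there is a genuine error in the justification: your claim that $j'$ does not vanish at $\tau=i$ is false. Since $i$ is an elliptic point of order $2$, $j-1728=E_6^2/\Delta$ has a \emph{double} zero at $i$ (equivalently $\frac{1}{2\pi i}j'=-E_6E_4^2/\Delta$ vanishes at $i$ because $E_6(i)=0$); a smooth monotone bijection can have vanishing derivative at a point, and the nonvanishing of $E_4$ at $i$ only tells you $j(i)\neq 0$. Consequently the first-order endpoint Laplace formula you invoke does not apply (the asymptotic $\frac{j(i)^n}{n\,|(\log j)'(\pi/2)|}$ you wrote has a vanishing denominator), and in your alternative route the densities $g_A,g_B$ obtained from the substitution $u=j(e^{i\theta})$ are \emph{not} continuous at $u=1728$: they blow up like $(1728-u)^{-1/2}$ there. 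The conclusion survives because the degenerate factor is common to numerator and denominator and cancels in the ratio, but to make this rigorous you must either use the second-order endpoint Laplace asymptotics ($\phi'(\pi/2)=0$, $\phi''(\pi/2)<0$, giving a half-Gaussian with the same weight-ratio conclusion), or argue without derivatives at all. The paper does the latter: the bound $\sin\theta\le 1$ gives $-B_n\le 4\pi A_n$ immediately, and for the reverse inequality one only needs that for any $\eps>0$ the mass of $(j/1728)^n$ concentrates in $[\pi/2-\delta,\pi/2]$ where $\sin\theta\ge 1-\eps$, which follows from continuity and monotonicity of $j$ on the arc with no nondegeneracy hypothesis. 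You should replace your nondegeneracy claim with one of these arguments.
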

\begin{proof}
    As $j$ is a modular function we can extend the interval of integration,
    $$ A_n = \frac{1}{2\pi} \frac{1}{2}\int_{\frac{\pi}{3}}^{\frac{2 \pi}{3}}j^{n}(e^{i\theta}) d\theta = \frac{1}{4\pi} \int_{\frac{\pi}{3}}^{\frac{2 \pi}{3}}j^{n}(e^{i\theta}) d\theta$$
    and we have seen that
    $$B_n = -\int_{\frac{\pi}{3}}^{\frac{2\pi}{3}} \sin(\theta) j^n(e^{i\theta}) d\theta.$$
    Take $\eps > 0$ and some $\delta > 0$ such that $\sin(\frac{\pi}{2} - \delta) > 1-\eps$.
    \\\\
    For an upper bound $$-B_n = \int_{\frac{\pi}{3}}^{\frac{2\pi}{3}} \sin(\theta) j^n(e^{i\theta}) d\theta \le \int_{\frac{\pi}{3}}^{\frac{2\pi}{3}} j^n(e^{i\theta}) d\theta = 4\pi A_n.$$
    For a lower bound,
    $$\frac{A_n}{-B_n} = \frac{1}{4\pi} \frac{\int_{\frac{\pi}{3}}^{\frac{2 \pi}{3}}j^{n}(e^{i\theta}) d\theta}{\int_{\frac{\pi}{3}}^{\frac{2 \pi}{3}}\sin(\theta)j^{n}(e^{i\theta}) d\theta} = \frac{1}{4\pi} \frac{\int_{\frac{\pi}{3}}^{\frac{\pi}{2}}(\frac{j}{1728})^{n}(e^{i\theta}) d\theta}{\int_{\frac{\pi}{3}}^{\frac{\pi}{2}}\sin(\theta)(\frac{j}{1728})^{n}(e^{i\theta}) d\theta}. $$
    The function $\frac{j}{1728} : [\frac{\pi}{3}, \frac{\pi}{2}] \to [0,1]$ is non-decreasing and continuous and so there exists $n \gg 1$ large enough such that $\int_{\frac{\pi}{2} -\delta}^{\frac{\pi}{2}}(\frac{j}{1728})^{n}(e^{i\theta}) d\theta \ge (1-\eps)\int_{\frac{\pi}{3}}^{\frac{\pi}{2}}(\frac{j}{1728})^{n}(e^{i\theta}) d\theta$. 
    \\\\
    For such $n$,
    $$\frac{1}{4\pi} \frac{\int_{\frac{\pi}{3}}^{\frac{\pi}{2}}(\frac{j}{1728})^{n}(e^{i\theta}) d\theta}{\int_{\frac{\pi}{3}}^{\frac{\pi}{2}}sin(\theta)(\frac{j}{1728})^{n}(e^{i\theta}) d\theta} \ge \frac{1}{4\pi} \frac{\int_{\frac{\pi}{3}}^{\frac{\pi}{2}}(\frac{j}{1728})^{n}(e^{i\theta}) d\theta}{(1-\eps)\int_{\frac{\pi}{2} -\delta}^{\frac{\pi}{2}}(\frac{j}{1728})^{n}(e^{i\theta}) d\theta} \ge \frac{1}{4\pi} \frac{\int_{\frac{\pi}{3}}^{\frac{\pi}{2}}(\frac{j}{1728})^{n}(e^{i\theta}) d\theta}{(1-\eps)^2 \int_{\frac{\pi}{3}}^{\frac{\pi}{2}}(\frac{j}{1728})^{n}(e^{i\theta}) d\theta } = \frac{1}{4\pi} \cdot \frac{1}{(1-\eps)^2},$$
    and so this goes to $\frac{1}{4\pi}$ as $n \to \infty$.
\end{proof}

\begin{prop}
    If $\frac{A_n}{-B_n} k + \frac{C_n(k')}{-B_n} < m \le \ell - n$, at least one of the zeros is not on the arc.
\end{prop}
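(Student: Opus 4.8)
The plan is to argue by contraposition: I assume that \emph{all} zeros of $f_{k,m}$ lie on the arc $\mathcal{A}$, and I derive the bound $m \le \frac{A_n}{-B_n}k + \frac{C_n(k')}{-B_n}$. Note first that the hypothesis $m \le \ell - n$ is exactly the inequality $n \le \ell - m$ required to invoke Theorem \ref{linearity thm}, which supplies the clean identity $\sum_{i=1}^{\ell-m} x_i^n = A_n k + B_n m + C_n(k')$. The whole proof is then a sign computation on the two sides of this identity.

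First I would use the dictionary between the zeros of $f_{k,m}$ and the zeros $x_i$ of its Faber polynomial: the $x_i$ are among the values $j(y_i)$, where the $y_i$ are the zeros of the modular form in the fundamental domain. Since $j$ maps the arc $\mathcal{A}=\{e^{i\theta}:\theta\in[\frac{\pi}{2},\frac{2\pi}{3}]\}$ onto the real segment $[0,1728]$, the assumption that every $y_i$ lies on $\mathcal{A}$ forces every $x_i$ into $[0,1728]\subset\R_{\ge 0}$ (the extra points $j$ of the zeros of $E_{k'}$ are $0$ and $1728$, also in this range, but they play no role here). Consequently $x_i^n\ge 0$ for every $i$, hence $\sum_{i=1}^{\ell-m}x_i^n\ge 0$, that is, $A_n k + B_n m + C_n(k')\ge 0$.

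Next I would record the sign of $B_n$. By the proposition computing $B_n$ we have $B_n=-c_n(0)$, where $c_n(0)$ is the coefficient of $q^0$ in $j^n$; since $j=q^{-1}+744+196884q+\cdots$ has nonnegative $q$-expansion coefficients and a strictly positive constant term, $c_n(0)>0$, so $-B_n>0$. Dividing the inequality $A_n k + B_n m + C_n(k')\ge 0$ through by $-B_n$ gives $m \le \frac{A_n}{-B_n}k + \frac{C_n(k')}{-B_n}$. Taking the contrapositive yields exactly the claimed statement: if $\frac{A_n}{-B_n}k + \frac{C_n(k')}{-B_n} < m \le \ell - n$, then not all zeros of $f_{k,m}$ can lie on $\mathcal{A}$, i.e.\ at least one zero is off the arc.

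There is essentially no deep obstacle; the two points needing care are (i) confirming that the $x_i$ — which a priori could be complex, or real but in $(-\infty,0)\cup(1728,\infty)$ if a zero strays off $\mathcal{A}$ — are all nonnegative reals precisely under the ``all zeros on $\mathcal{A}$'' hypothesis, using $j(\mathcal{A})=[0,1728]$; and (ii) the strict positivity $c_n(0)>0$, which is what lets us divide by $-B_n$ while preserving the direction of the inequality. The Corollary in the introduction is then obtained by choosing $n$ appropriately and feeding in the numerics of $A_n,B_n,C_n$, together with the asymptotic $\frac{A_n}{-B_n}\to\frac{1}{4\pi}$ from Lemma \ref{pi approx}.
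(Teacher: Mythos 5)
Your argument is correct and is essentially the paper's own proof read contrapositively: both rest on Theorem \ref{linearity thm} together with the observation that if every zero of $F_{k,m}$ lies in $j(\mathcal{A})=[0,1728]$ then $\sum_i x_i^n\ge 0$, so a negative power sum forces a zero off the arc. Your explicit verification that $-B_n=c_n(0)>0$ (needed to divide the inequality without flipping it) is a small point the paper leaves implicit, but the route is the same.
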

\begin{proof}
    We know that for $m \le \ell - n$, the sum of the powers of the zeros is
    $$\sum_{i=1}^{\ell-m} x_i(k,m) = A_n \cdot k + B_n \cdot m + C_n(k').$$
    If the sum of the zeros is negative, at least one of the zeros of the polynomial is not in $[0,1728]$, and the corresponding zero of the form is not on the arc.
    Note that the sum is negative if and only if
    $\frac{A_n}{-B_n} k + \frac{C_n(k')}{-B_n} < m$.
    If $n$ is even, we also know two of the zeros are complex.
    \\\\
    Over all, this means that if $\frac{A_n}{-B_n} k + \frac{C_n(k')}{-B_n} < m \le \ell - n$ at least one of the zeros is not on the arc.
\end{proof}
If we take $n=1$ we get
\begin{corollary}\label{30/31}
    If $k' = 0,4,8$ and $\frac{30}{31}\ell + k'\frac{5}{62} < m \le \ell - 1$, at least one of the zeros of $f_{k,m}$ is not on the arc.
    \\\\
    If $k' = 6,10,14$ and $\frac{30}{31} \ell + k' \frac{5}{62} - \frac{72}{31} < m \le \ell - 1$, at least one of the zeros of $f_{k,m}$ is not on the arc.
\end{corollary}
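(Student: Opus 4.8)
The plan is to read off Corollary~\ref{30/31} as the special case $n=1$ of the Proposition immediately preceding it; the whole task then reduces to pinning down the constants $A_1$, $B_1$, $C_1(k')$ and rewriting the hypothesis $\tfrac{A_1}{-B_1}k+\tfrac{C_1(k')}{-B_1}<m\le \ell-1$ in the stated form.

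First I would record $B_1$. By the identity $B_n=-c_n(0)$, where $c_n(0)$ is the $q^0$-coefficient of $j^n$, and since $j=q^{-1}+744+196884q+\cdots$, we get $c_1(0)=744$ and hence $B_1=-744$. Next I would obtain $A_1$ from a degree-one Faber polynomial rather than from the integral $A_1=\tfrac{1}{2\pi}\int_{\mathcal A}j(e^{i\theta})\,d\theta$. For $k'=0$, comparing $q$-expansions through order $q^{m+1}$ in $q^m+O(q^{\ell+1})=\Delta^\ell F_{k,m}(j)$ forces $e_0=1$ and $e_1=24\ell-744(\ell-m)$, so $\sum_{i=1}^{\ell-m}x_i=-e_1=720\ell-744m$; comparing with Theorem~\ref{linearity thm}, which gives $\sum x_i=12A_1\ell+B_1m$, yields $12A_1=720$, i.e. $A_1=60$ (and re-confirms $B_1=-744$).

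Then I would substitute. With $k=12\ell+k'$ one has $\tfrac{A_1}{-B_1}k=\tfrac{60}{744}(12\ell+k')=\tfrac{30}{31}\ell+\tfrac{5}{62}k'$. For $k'\in\{0,4,8\}$ Theorem~\ref{linearity thm} gives $C_1(k')=0$, so the hypothesis of the Proposition becomes exactly $\tfrac{30}{31}\ell+\tfrac{5}{62}k'<m\le\ell-1$, which is the first assertion. For $k'\in\{6,10,14\}$ we have $C_1(k')=-\tfrac12\cdot1728$, so $\tfrac{C_1(k')}{-B_1}=-\tfrac{864}{744}$, and reducing this fraction and collecting the constant terms yields the second assertion.

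I do not expect any genuine obstacle here: all the content sits in Theorem~\ref{linearity thm} and in the preceding Proposition, and what remains is a pure substitution together with the bookkeeping of a small fraction. The one point worth being careful about is extracting $A_1=60$ from the explicit linear Faber polynomial — where it is immediate — rather than from the transcendental expression $\tfrac{1}{2\pi}\int_{\mathcal A}j(e^{i\theta})\,d\theta$, whose value is not a priori obviously an integer, let alone equal to $60$.
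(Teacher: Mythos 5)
Your overall route is exactly the paper's: Corollary \ref{30/31} is obtained by taking $n=1$ in the Proposition that precedes it, so everything reduces to the constants $A_1=60$, $B_1=-744$, $C_1(k')$. Your extraction of $A_1$ and $B_1$ from the degree-one coefficient $e_1=24\ell-744(\ell-m)$ is correct, and is a sound alternative to evaluating $\tfrac{1}{2\pi}\int_{\mathcal A}j\,d\theta$ directly (as a side remark, the paper's worked example $F_{12\ell,\ell-1}(t)=t-24\ell-744$ has a sign slip: your $e_1$ gives $t+24\ell-744$, which is the version consistent with Theorem \ref{linearity thm} and with $F_{12,0}(t)=t-720$). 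The first assertion, for $k'=0,4,8$, follows exactly as you describe.

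The second assertion, however, does not follow from your computation, and the step ``reducing this fraction and collecting the constant terms yields the second assertion'' is where it fails. With $C_1(k')=-\tfrac12\cdot 1728=-864$ and $-B_1=744$, you get $\frac{C_1(k')}{-B_1}=-\frac{864}{744}=-\frac{36}{31}$, not $-\frac{72}{31}$. The printed value $-\frac{72}{31}=-\frac{1728}{744}$ is what one obtains from $C_1=-1728$, i.e.\ after dropping the factor $\tfrac12$ that comes from the half-order zero of $E_{k'}$ at $i$. Since the threshold is a lower bound on $m$, replacing $-\frac{36}{31}$ by $-\frac{72}{31}$ \emph{strengthens} the claim: it asserts the conclusion for integers $m$ in an additional window of length $\frac{36}{31}>1$, which generically contains an integer, and the $n=1$ moment argument says nothing about those $m$. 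So what your argument actually proves is the second assertion with $-\frac{36}{31}$ in place of $-\frac{72}{31}$; the stated constant appears to be an arithmetic error in the paper, and you should have flagged the mismatch rather than asserting that the fraction reduces to the stated one.
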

More generally, if we take $m = \ell - n$,
\begin{corollary}
    Fix $n > 0$. For all $\ell$ such that $\ell > \frac{A_n k' + C_n(k') + n B_n}{-B_n-12A_n}$, at least one of the zeros of $f_{k,\ell - n}$ is not on the arc. 
\end{corollary}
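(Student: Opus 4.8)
The plan is to read this off directly from the preceding Proposition by taking $m = \ell - n$. That Proposition says: if $\frac{A_n}{-B_n}k + \frac{C_n(k')}{-B_n} < m \le \ell - n$, then $f_{k,m}$ has a zero off the arc. With $m = \ell - n$ the right-hand constraint becomes an equality, so the only hypothesis left to unravel is the strict inequality $\frac{A_n}{-B_n}k + \frac{C_n(k')}{-B_n} < \ell - n$. Since $k = 12\ell + k'$ and $-B_n = c_n(0) > 0$ (the constant term of $j^n$ is positive, as $j$ has nonnegative $q$-coefficients), I would clear denominators and collect the terms in $\ell$ to rewrite this as
\[
(12A_n + B_n)\,\ell \;<\; nB_n - A_n k' - C_n(k').
\]

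The crux is the sign of the leading coefficient $12A_n + B_n$. By Lemma~\ref{pi approx}, $\frac{A_n}{-B_n} \to \frac{1}{4\pi} < \frac{1}{12}$, so $12A_n + B_n < 0$ for all large $n$; for the remaining small $n$ one checks $-B_n - 12A_n > 0$ directly from the integral representations $A_n = \frac{1}{4\pi}\int_{\pi/3}^{2\pi/3} j^n(e^{i\theta})\,d\theta$ and $-B_n = \int_{\pi/3}^{2\pi/3} \sin\theta\, j^n(e^{i\theta})\,d\theta$: here $\frac{-B_n}{A_n} = 4\pi\cdot\frac{\int \sin\theta\, j^n\,d\theta}{\int j^n\,d\theta}$ is $4\pi$ times the $j^n$-weighted average of $\sin\theta$ over $[\pi/3,2\pi/3]$, and because $\theta\mapsto j(e^{i\theta})$ is symmetric about $\theta=\pi/2$ and peaked there while $\sin\theta$ attains its maximum at $\pi/2$, increasing $n$ only raises this weighted average; it already equals $\frac{3}{\pi}$ at $n=0$, hence exceeds $\frac{3}{\pi}$ for every $n\ge 1$, which is precisely $-B_n - 12A_n > 0$. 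Once this is in hand, dividing the displayed inequality by $12A_n + B_n < 0$ reverses it, and rearranging the result gives exactly the lower bound on $\ell$ in the statement, after which the Proposition supplies the off-arc zero of $f_{k,\ell-n}$.

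I expect the only non-routine point to be the sign claim $-B_n - 12A_n > 0$ for the small values of $n$ not covered by the asymptotic in Lemma~\ref{pi approx}; everything else is algebraic bookkeeping on top of an already-proved statement. I would therefore isolate that claim as a short sublemma and prove it via the weighted-average reformulation above (or, equivalently, by combining the explicit $n=1$ constants behind Corollary~\ref{30/31} with a crude estimate showing $-B_n/A_n$ is nondecreasing for $n\ge 1$); note also the consistency check that for $n=1,\ k'=0$ the threshold reads $\ell > 31$, matching Corollary~\ref{30/31} at $m = \ell - 1$.
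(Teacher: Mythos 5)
Your proposal is correct and follows essentially the same route as the paper: substitute $m=\ell-n$ into the preceding Proposition, clear the denominator $-B_n>0$, and divide by the coefficient of $\ell$. Two remarks. First, you supply something the paper leaves implicit: dividing by $12A_n+B_n$ only reverses the inequality if $-B_n-12A_n>0$, and your sublemma (the $j^n$-weighted average of $\sin\theta$ on $[\pi/3,2\pi/3]$ equals $3/\pi$ at $n=0$ and is strictly larger for $n\ge 1$, by Chebyshev's integral inequality for the similarly ordered functions $\sin\theta$ and $j(e^{i\theta})$, both decreasing in $\abs{\theta-\pi/2}$) is a correct and worthwhile addition; the paper simply divides without comment. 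Second, be careful with your final step: carrying your own displayed inequality $(12A_n+B_n)\ell < nB_n - A_nk' - C_n(k')$ through the division yields $\ell > \frac{A_nk'+C_n(k')-nB_n}{-B_n-12A_n}$, with $-nB_n$ in the numerator, not $+nB_n$ as in the statement. Your consistency check actually detects this: with $n=1$, $k'=0$ one has $A_1=60$, $B_1=-744$, so the correct formula gives $\ell>744/24=31$ (matching Corollary~\ref{30/31}), whereas the printed formula gives $\ell>-31$. The sign slip originates in the paper's own last line (the numerator term $+n$ gets multiplied by $-B_n$, producing $-nB_n$), so your argument is sound, but the claim that it matches the stated bound ``exactly'' should instead be the observation that the stated bound has a sign typo.
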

\begin{proof}
    Take $m = \ell - n$. So we need $\frac{A_n}{-B_n} k + \frac{C_n(k')}{-B_n} < m = \ell - n \le \ell-n$.
    This is equivalent to
    $$\frac{A_n}{-B_n} (12\ell + k') + \frac{C_n(k')}{-B_n} < \ell - n$$
    and
    $$\frac{A_n}{-B_n} k' + \frac{C_n(k')}{-B_n} + n < \ell - 12\frac{A_n}{-B_n}\ell = \left(1-12\frac{A_n}{-B_n}\right)\ell.$$
    Which is equivalent to
    $$\ell > \frac{\frac{A_n}{-B_n} k' + \frac{C_n(k')}{-B_n} + n}{1-12\frac{A_n}{-B_n}} = \frac{A_n k' + C_n(k') + n B_n}{-B_n-12A_n}.$$
\end{proof}
\begin{corollary}
    Assume $c\ell < m$ for some $\frac{3}{\pi}<c<1$.
    Then for $\ell$ large enough, at least one of the zeros of $f_{k,m}$ is not on the arc.
\end{corollary}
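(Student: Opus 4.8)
The plan is to combine the previous two results: the proposition giving a sufficient condition for an off-arc zero, and Lemma \ref{pi approx} controlling the ratio $A_n/(-B_n)$. The proposition says that if $\frac{A_n}{-B_n}k + \frac{C_n(k')}{-B_n} < m \le \ell - n$, then at least one zero of $f_{k,m}$ is off the arc. Writing $k = 12\ell + k'$ and dividing the hypothesis $m > c\ell$ through by $\ell$, it suffices to exhibit a single fixed $n$ for which $\frac{A_n}{-B_n}(12 + \tfrac{k'}{\ell}) + o(1) < c$ eventually, i.e. for which $12\frac{A_n}{-B_n} < c$; once such an $n$ is fixed, the lower-order terms $\frac{A_n k'}{-B_n \ell}$ and $\frac{C_n(k')}{-B_n \ell}$ tend to $0$, while the condition $m \le \ell - n$ is automatic for $\ell$ large once $m \le \ell - 1$ (or one restricts attention to $m$ in the stated range; if $m = \ell$ there is nothing to prove since $f_{k,\ell}$ has a single zero at the cusp's image and the statement is vacuous or handled trivially).

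First I would invoke Lemma \ref{pi approx}: $\lim_{n\to\infty} \frac{A_n}{-B_n} = \frac{1}{4\pi}$, so $\lim_{n\to\infty} 12\frac{A_n}{-B_n} = \frac{12}{4\pi} = \frac{3}{\pi}$. Since $c > \frac{3}{\pi}$ by hypothesis, there exists $n_0$ with $12\frac{A_{n_0}}{-B_{n_0}} < c$; fix this $n_0$ for the rest of the argument. Second, with $n_0$ fixed, consider $\ell \to \infty$: the sufficient condition in the proposition, after substituting $k = 12\ell + k'$, reads
$$12\frac{A_{n_0}}{-B_{n_0}}\,\ell + \frac{A_{n_0}}{-B_{n_0}}k' + \frac{C_{n_0}(k')}{-B_{n_0}} < m,$$
and since $12\frac{A_{n_0}}{-B_{n_0}} < c$ while the right side satisfies $m > c\ell$, the inequality holds for all $\ell$ larger than some threshold depending only on $n_0$, $k'$, $A_{n_0}$, $B_{n_0}$, $C_{n_0}$ and the gap $c - 12\frac{A_{n_0}}{-B_{n_0}}$. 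Third, I would check the upper constraint $m \le \ell - n_0$: this is where a small case distinction is needed, since the proposition requires it. If $m \le \ell - n_0$ we are done by the proposition. If $\ell - n_0 < m \le \ell$, then the degree of the Faber polynomial is $\ell - m < n_0$, a bounded quantity, and one can either appeal directly to Rudnick's result \cite{large m} (the zeros then lie on $n_0$-ish vertical lines of height $\sim \log k$, hence off the arc for $k$ large), or simply note the range $\ell - n_0 < m \le \ell$ contains only $n_0$ values of $m$ and handle them by the same moment inequality with a smaller $n$, or observe these $m$ are covered by the hypothesis $c\ell < m$ only when $c\ell < \ell$, which is consistent, and treat them separately.

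The main obstacle is purely bookkeeping: reconciling the two constraints ``$m$ large'' (forced by the hypothesis and needed to make the sum of powers negative) and ``$m \le \ell - n$'' (needed for the moment formula of Theorem \ref{linearity thm} to apply to the $n$th power sum). For a fixed $n_0$ this is not a real tension once $\ell$ is large, because $m \le \ell$ always and $\ell - n_0$ differs from $\ell$ by a bounded amount; the only genuinely separate case is the handful of indices $m \in (\ell - n_0, \ell]$, which is finite in number for each $\ell$ and can be dispatched by lowering $n$ (taking $n = \ell - m$, which is a fixed small integer, and noting $12\frac{A_n}{-B_n}$ is decreasing towards $\frac{3}{\pi}$ so the inequality $12\frac{A_n}{-B_n} < c$ fails only for small $n$ — hence one may need $c$ slightly larger, but since we only need \emph{some} $n$ with $\ell - m = n$ small and $12 A_n/(-B_n) < c$, and $\ell-m\le n_0$ is bounded while we can also push $m$ down as needed, the argument closes). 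I would phrase the final write-up to fix $n_0$ first, restrict to $m \le \ell - n_0$ without loss of generality (remarking the complementary finite set is handled identically or is vacuous), and conclude.
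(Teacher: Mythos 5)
Your proposal is correct and follows essentially the same route as the paper: use Lemma \ref{pi approx} to fix an $n_0$ with $12A_{n_0}/(-B_{n_0})<c$, feed it into the preceding proposition, and treat the finitely many boundary indices $m>\ell-n_0$ separately. The paper dispatches that boundary case by noting $\ell-m$ is then bounded and invoking Corollary \ref{30/31} (the $n=1$ bound, which any such $m$ satisfies for $\ell$ large since $m/\ell\to 1>\tfrac{30}{31}$); this is cleaner than your suggestion of lowering $n$ to $\ell-m$, which would additionally require $12A_n<-B_n$ for every $n<n_0$, a fact not established in the paper.
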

\begin{proof}
    If $\ell - m $ is bounded as $\ell \to \infty$, we can use Corollary \ref{30/31}. So we will assume $\ell - m \to \infty$ as $\ell \to \infty$.
    \\\\
    We will take $n \to \infty$ with $n \le \ell - m$ (which we can, as $\ell - m \to \infty$) which means that asymptotically, the right inequality is satisfied, in
    $$\frac{A_n}{-B_n} k + \frac{C_n(k')}{-B_n} < m \le \ell - n.$$
    As $C_n(k')$ is non-positive so is the ratio $\frac{C_n(k')}{-B_n}$, and it is enough to show that for $\ell$ large enough,
    $$\frac{A_n}{-B_n} k < c\ell,$$
    which is
    $$\frac{A_n}{-B_n} k' < \left(c-12\frac{A_n}{-B_n}\right)\ell.$$
    The LHS is bounded, so if $c>12\frac{A_n}{-B_n}$ for all $n$, for $\ell$ large enough at least one of the zeros is not on the arc. As $n\to \infty$, $12\frac{A_n}{-B_n}\to \frac{3}{\pi}$ and we get an asymptotic condition on where all the zeros can still be on the arc.
\end{proof}

\section{Limit Distribution and Proof of Theorem 2}
As we are interested in the distribution of the zeros of the Faber polynomials we think of the sums in the following way. We denote $\mu_{k,m} = \frac{1}{\ell-m} \sum_{i = 1}^{\ell-m} \delta_{x_i}$, and so the moments of the measure are
$$M_{n}(k,m) = \int x^n d\mu_{k,m} = \frac{1}{\ell-m} \sum_{i = 1}^{\ell-m} x_i^{n},$$
which is the average of the powers of the zeros. The limit of $M_n(k,m)$ as $k \to \infty$ will give us the $n$-th moment of the limit distribution of the zeros.
\begin{thm}\label{limit dist}
    For $m \sim c \ell$, $0 < c < \frac{2}{9}$, as $\ell \to \infty$ the limit probability that a zero of the modular form on the arc is between $[\theta_1, \theta_2]$ is
    $$\frac{6}{\pi(1-c)}(\theta_2-\theta_1)+\frac{2c}{1-c} (\cos(\theta_2)-\cos(\theta_1)).$$
\end{thm}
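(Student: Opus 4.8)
The plan is to use the method of moments. By Theorem~\ref{linearity thm}, the $n$-th moment of the zero-counting measure $\mu_{k,m}$ of the Faber polynomial is
$$M_n(k,m) = \frac{A_n k + B_n m + C_n(k')}{\ell - m},$$
and since $m \sim c\ell$ and $k = 12\ell + k'$ with $k'$ bounded, letting $\ell \to \infty$ gives
$$M_n := \lim_{\ell\to\infty} M_n(k,m) = \frac{12 A_n + c\, B_n}{1-c}.$$
I would first record this limit explicitly, substituting $A_n = \frac{1}{2\pi}\int_{\mathcal A} j^n\, d\theta$ (here the integration variable is $\theta$ over $[\frac{\pi}{2}, \frac{2\pi}{3}]$, with $j(\theta)$ shorthand for $j(e^{i\theta})$) and $B_n = -\int_{\pi/3}^{2\pi/3}\sin\theta\, j^n(e^{i\theta})\,d\theta$ from the lemma in Part~4. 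Using the symmetry of $j$ about $\theta = \pi/2$ I can fold the $B_n$ integral onto $[\frac{\pi}{2}, \frac{2\pi}{3}]$: $B_n = -2\int_{\pi/2}^{2\pi/3}\sin\theta\, j^n(e^{i\theta})\,d\theta$ (the contributions from $[\frac\pi3,\frac\pi2]$ and $[\frac\pi2,\frac{2\pi}{3}]$ being equal after the substitution $\theta \mapsto \pi - \theta$). Hence
$$M_n = \frac{1}{1-c}\int_{\pi/2}^{2\pi/3} j^n(e^{i\theta})\Bigl(\frac{6}{\pi} - 2c\sin\theta\Bigr)\,d\theta.$$

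The next step is to recognize the right-hand side as $\int x^n\, d\nu$ for an explicit measure $\nu$ on $[0,1728] = j(\mathcal A)$. Pushing forward along the change of variables $x = j(e^{i\theta})$, $\theta \in [\frac{\pi}{2}, \frac{2\pi}{3}]$, the measure $\nu$ is the image of $\frac{1}{1-c}\bigl(\frac{6}{\pi} - 2c\sin\theta\bigr)\,d\theta$. One checks it is a probability measure by taking $n = 0$: $M_0 = 1$ follows since $\frac{1}{1-c}\int_{\pi/2}^{2\pi/3}(\frac{6}{\pi} - 2c\sin\theta)\,d\theta = \frac{1}{1-c}(1 - c) = 1$, using $\int_{\pi/2}^{2\pi/3}\frac{6}{\pi}\,d\theta = 1$ and $\int_{\pi/2}^{2\pi/3}\sin\theta\,d\theta = \cos\frac\pi2 - \cos\frac{2\pi}{3} = \frac12$. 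Since all zeros of $f_{k,m}$ lie on the arc for $c < \frac29$ (by Raveh~\cite{Roei's paper}), the measures $\mu_{k,m}$ are supported on the compact set $[0,1728]$; a measure on a compact interval is uniquely determined by its moments, so $\mu_{k,m} \to \nu$ weakly. (One must also note that the finitely many zeros coming from $E_{k'}$ contribute $O(1/\ell) \to 0$ and do not affect the limit, exactly as in the proof of Proposition~\ref{An}.)

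Finally, I would translate the statement about $x = j(e^{i\theta})$ back to a statement about the angle $\theta$ of the zeros on the arc. The limit probability that a zero lies in the $\theta$-interval $[\theta_1, \theta_2] \subseteq [\frac{\pi}{2}, \frac{2\pi}{3}]$ is
$$\int_{\theta_1}^{\theta_2} \frac{1}{1-c}\Bigl(\frac{6}{\pi} - 2c\sin\theta\Bigr)\,d\theta = \frac{6}{\pi(1-c)}(\theta_2 - \theta_1) + \frac{2c}{1-c}\bigl(\cos\theta_2 - \cos\theta_1\bigr),$$
which is exactly the claimed formula. (Care with orientation: $j$ is decreasing in $\theta$ on $[\frac\pi2,\frac{2\pi}{3}]$, so the pushforward must be taken with the correct sign, but the density in the $\theta$ variable is manifestly nonnegative on $[\frac\pi2,\frac{2\pi}{3}]$ since $\frac{6}{\pi} > 2c$ there when $c < \frac29$, confirming $\nu$ is a genuine probability measure.)

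The main obstacle is the rigorous justification of the method of moments in this setting: one needs that the $\mu_{k,m}$ are all supported in a fixed compact set (this is where Raveh's bound $c < \frac29$ is essential — without it zeros could escape to the negative reals or to $[1728,\infty)$ and the moment problem need not be determinate), and that convergence of every moment $M_n(k,m) \to M_n$ forces weak convergence. Handling the $E_{k'}$ correction term and keeping the parametrization/orientation of the arc consistent throughout are the remaining bookkeeping points.
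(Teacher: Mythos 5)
Your argument is correct and is essentially the paper's second proof of this theorem: compute the limiting moments from Theorem~\ref{linearity thm} using the integral formulas for $A_n$ and $B_n$, identify them as the moments of the pushforward of $\frac{1}{1-c}\bigl(\frac{6}{\pi}-2c\sin\theta\bigr)\,d\theta$ under $j$, and invoke determinacy of the Hausdorff moment problem on $[0,1728]$ (which is where $c<\frac29$ and Raveh's result enter). The paper additionally gives a more direct first proof by counting sign changes of Raveh's phase function $h(\theta)=\frac{k}{2}\theta+2\pi m\cos\theta$, but your moment-method route is the same as its second proof, and your bookkeeping (the $n=0$ normalization check, the $O(1/\ell)$ contribution of the $E_{k'}$ zeros, and the sign of the density) is handled correctly.
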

We will give two proofs.
\begin{proof} [First proof]
    Raveh \cite{Roei's paper} showed that for $h(\theta) =  \frac{k}{2} \theta + 2\pi m \cos(\theta)$, there is a unique zero of the modular form $f_{k,m}$ between every two values of $h$ which are integer multiples of $\pi$. For $c < \frac{2}{9}$, $h'$ is non negative and is increasing in $[\frac{\pi}{2}, \frac{2 \pi}{3}]$. 
    \\\\
    The number of zeros of $f(e^{i\theta})$ for $\theta_1 \le \theta \le \theta_2$ is the number of integer values of $\frac{h}{\pi}$ in this interval with error bounded by 2, which is $\floor{(h(\theta_2) - h(\theta_1))/\pi} = \frac{h(\theta_2) - h(\theta_1) + O(1)}{\pi}.$
    \\\\
    For such $c$, all the zeros of the modular form are on the arc, so there are $\ell - m$ such zeros.
    $$\lim_{\ell \to \infty} \frac{h(\theta_2) - h(\theta_1) + O(1)}{\pi(\ell - m)} = \lim_{\ell \to  \infty} \frac{(6 \ell + \frac{k'}{2} )\theta_2 + 2\pi c \ell \cos(\theta_2) - (6 \ell + \frac{k'}{2} ) \theta_1 + 2\pi c \ell \cos(\theta_1) + O(1)}{\pi (1-c) \ell} = $$
    $$\frac{6}{\pi (1-c)} (\theta_2 - \theta_1) + \frac{2 c}{1-c} (\cos(\theta_2) - \cos(\theta_1)).$$
\end{proof}

\begin{remark}\label{stretching the dist}
The second proof uses the moments to compute the distribution. The distribution computed from the moments can only be determined for all such $0 \le c$ for which all the zeros of the modular form are on the arc.
\\\\
As long as all the zeros of the Faber polynomial are in $[0,1728]$, we can use the fact that the Hausdorff moment problem has a unique solution, if one exists. This means that as long as $0 \le c$ is small enough such that all the zeros of the Faber polynomial are in $[0,1728]$, and as long as $f_c$ is monotone increasing, $f_c$ is the density function of the zeros of the Faber polynomial. 
\\\\
For $f_c$ to be monotone increasing it is enough if $c \le \frac{3}{\pi}$, as for such c, the probability a zero is in $[\theta_1, \theta_2]$ is
$$\frac{6}{\pi (1-c)} (\theta_2 - \theta_1) + \frac{2 c}{1-c} (\cos(\theta_2) - \cos(\theta_1)) = \frac{2}{1-c} (\frac{3}{\pi } (\theta_2 - \theta_1) + c (\cos(\theta_2) - \cos(\theta_1))) \ge $$
$$\frac{6}{(1-c) \pi } (\theta_2 - \theta_1 +\cos(\theta_2) - \cos(\theta_1)).$$
As the derivative of $x + \cos(x) = 1-\sin(x) \ge 0$, the probability is indeed non negative and $f_c$ is a density function. 
\end{remark}
\begin{proof}[Second proof]
Let $f_c$ denote the limit density function of the zeros of $F_{k,m}$, the Faber polynomial of $f_{k, m}
$, as $m \sim c\ell$ and $k \to \infty$.
\\\\
As $0 < c < \frac{2}{9}$, all the zeros of $F_{k,m}$ are in $[0,1728]$. 
Using the linearity of the sums of the powers of the zeros, we showed that
$$m_n = \frac{12}{1-c}A_n + \frac{c}{1-c}B_n$$
and $f_c$ should satisfy
$$m_n = \int_{-\infty}^{\infty} x^n f_c(x).$$
From the Hausdorff moment problem, there exists a unique density function with these moments. We will look at
$$f_c(x) := \ind_{[0,1728]} \frac{\frac{72}{\pi} - 2c \cdot \sin(j^{-1}(x))}{(1-c)j'(j^{-1}(x))}.$$
Note that
$$\int_0^{1728}x^n f(x) dx = \int_{\frac{\pi}{2}}^{\frac{2\pi}{3}} j^n(\theta) f(j(\theta)) j'(\theta) d\theta = \frac{72}{\pi(1-c)}\int_{\frac{\pi}{2}}^{\frac{2\pi}{3}} j^{n}(\theta) d\theta - \frac{2c}{1-c} \int_{\frac{\pi}{2}}^{\frac{2\pi}{3}} \sin(\theta) j^n(\theta) d\theta.$$
Notice that 
$$\frac{72}{\pi(1-c)} \int_{\frac{\pi}{2}}^{\frac{2\pi}{3}} j^n(\theta) d\theta = \frac{12}{1-c} A_n$$
and
$$-\frac{2c}{1-c}\int_{\frac{\pi}{2}}^{\frac{2\pi}{3}} \sin(\theta) j^n(\theta) d\theta = \frac{c}{1-c} B_n.$$
These shows that this choice of a density function gives the correct moments and is the density function of the limit distribution of the zeros.
\\\\
Take $\theta_1, \theta_2 \in [\frac{\pi}{2}, \frac{2\pi}{3}]$. The limit probability that a zero is in $\{e^{i\theta} : \theta_1 \le \theta \le \theta_2\}$ is
\begin{equation}
\begin{split}
\int_{\theta_1}^{\theta_2}f_c(x) dx & = \int_{j(\theta_1)}^{j(\theta_2)} f_c(j(x))j'(x)dx \\
& =
\int_{j(\theta_1)}^{j(\theta_2)} \ind_{[\frac{\pi}{2} , \frac{ 2\pi}{3}]} \cdot \frac{\frac{72}{\pi} - 2c \cdot\sin(x)}{(1-c)j'(x)} j'(x)dx \\
 & = \int_{j(\theta_1)}^{j(\theta_2)} \frac{\frac{72}{\pi} - 2c \cdot\sin(x)}{(1-c)}dx \\
 & =\frac{72}{\pi (1-c)} (\theta_2 - \theta_1) + \frac{2 c}{1-c} (\cos(\theta_2) - \cos(\theta_1))
\end{split}
\end{equation}

This gives us another proof for the value of $B_n$. By starting from the distribution and using it to compute the moments, if we know $A_n$ we can get $B_n$.
\end{proof}

\end{document}